\documentclass[10pt]{article}
\usepackage[utf8]{inputenc}
\usepackage[english]{babel}
\usepackage{comment}
\usepackage[dvipsnames,svgnames,x11names]{xcolor}

\usepackage{makeidx}
\makeindex
\usepackage{tikz}
\usepackage{bm}
\usetikzlibrary{arrows.meta}
\usetikzlibrary{shapes.multipart, positioning}
\usepackage{multicol}
\usepackage{amsfonts,amsthm,amsmath,amssymb,enumitem}
\usepackage{caption}
\usepackage{graphicx,float,subcaption}
\usepackage{verbatim}
\usepackage{amsthm}
\usepackage{multirow}

\usepackage{graphicx}
\usepackage{caption}
\usepackage{booktabs}
\usepackage{hyperref}
\usepackage{threeparttable}

\graphicspath{{Figures/}}
\hypersetup{colorlinks=true,linkcolor=black,anchorcolor=black,citecolor=black}
\usepackage{algorithm}
\usepackage{algpseudocode}
\usepackage{tikz}
\usetikzlibrary{arrows.meta,calc,positioning,fit,backgrounds}
\usepackage{array}
\usepackage{siunitx}
\newcolumntype{C}{>{\centering\arraybackslash}p{1.2cm}}
\newcolumntype{R}{>{\raggedleft\arraybackslash}p{1.5cm}}

\newtheorem{theorem}{Theorem}[section]
\newtheorem{lemma}{Lemma}[section]
\newtheorem{assumption}{Assumption}[section]
\newtheorem{proposition}[theorem]{Proposition}

\newtheorem{remark}{Remark}[section]
\newtheorem{definition}[theorem]{Definition}
\newtheorem{corollary}[theorem]{Corollary}
\usepackage{multirow}

\numberwithin{equation}{section}
\allowdisplaybreaks
\date{}

\title{On the Uniqueness of a Nonlinear Discrete Calder\'{o}n Problem\thanks{The work of M. Deng is supported by the Hong Kong PhD Fellowship Scheme. The work of B. Jin is supported by Hong Kong RGC General Research Fund (Projects 14306423 and 14306824), the ANR / RGC Joint Research Scheme (A-CUHK402/24), and a start-up fund from The Chinese University of Hong Kong.}}
\author{Elena Beretta\thanks{Division of Science, New York University Abu Dhabi, Saadiyat Island, United Arab Emirates (\texttt{eb147@nyu.edu, ag189@nyu.edu})} \and Maolin Deng\thanks{Department of Mathematics, The Chinese University of Hong Kong, Shatin, N.T., Hong Kong (\texttt{mldeng@link.cuhk.edu.hk, b.jin@cuhk.edu.hk})} \and Alberto Gandolfi\footnotemark[2] \and Bangti Jin\footnotemark[3]}

\begin{document}
\maketitle

\begin{abstract}
The discrete Calder\'on problem aims at recovering the conductivity on the
edges of a graph from boundary measurements, encoded by the discrete Dirichlet-to-Neumann (DtN) map. The problem has been intensively studied in the linear case on square lattices since the seminal works of Curtis and Morrow. In this work, we investigate a nonlinear analogue of the discrete Calder\'on problem
for a semilinear second-order elliptic equation on square lattices. We establish three uniqueness results for the conductivity recovery. First, we show that conductivity-dependent corner excitations allow a layer-by-layer reconstruction of the conductivity. Second, we study the linearization of the nonlinear DtN map at an arbitrary background boundary datum and prove that the conductivity and the background potential are uniquely determined by one pair of nonlinear Cauchy data and the linearized DtN map. Third, we show that the linearized data can be replaced by finitely many nonlinear measurements, which
uniquely determine the conductivity.\\
\textbf{Key words}: discrete Calder\'on problem, uniqueness, semilinear discrete conductivity equation, discrete
Dirichlet-to-Neumann map
\end{abstract}


\section{Introduction}
The Calder\'{o}n problem is concerned with the recovery of an unknown conductivity distribution in the domain from measurements taken on the boundary. Since the seminal work \cite{Calderon:1980} of Alberto Calder\'{o}n, the uniqueness issue has been extensively studied \cite{FeldmanSaloUhlmann:2025}. In the literature there is also a discrete version of the Calder\'{o}n problem posed on graphs, which concerns recovering the conductivity on the edges of the graph from measurements taken at the boundary nodes. The discrete version was first studied by Curtis and Morrow \cite{CurtisMorrow:1990,CurtisMorrow:1991}, who proved uniqueness for square lattices.

Consider a graph \(G = (E,D,\partial D)\), with $D$ and $\partial D$ being the sets of interior nodes and boundary nodes, respectively.  Let \(\overline D:=D\cup\partial D\) be the full set of nodes, and the edge set \(E\) consists of unordered pairs \(\{p, q\}\) with \(p, q \in \overline{D}\) (but not $p,q\in\partial D$ simultaneously, i.e., no edge on the boundary $\partial D$). A graph \(G=(E,D,\partial D)\) is {connected} if for every \(p,q\in \overline D\), there exists a finite path
$p=p_0,p_1,\dots,p_m=q$ in \(\overline D\) such that \(\{p_{j-1},p_j\}\in E\) for all \(j=1,\dots,m\). The edge structure naturally implies an isotropic conductivity on the graph $G$, i.e., the conductance between any two connected nodes is direction-independent. For any node \(p \in D\), we define its neighborhood $N(p)$ by $N(p) := \big\{ q \in \overline{D} : \{p, q\} \in E \big\}$. We assume that every boundary node has exactly one neighbor, so \(N(p)\) is a singleton for every \(p\in\partial D\). For a set \(S\), be it \(D\), \(\partial D\), or \(E\), the notation \(\mathbb{R}^{S}\) denotes the space of real-valued functions on \(S\), and \(\mathbb{R}_+ = (0, +\infty)\).

Let \(\gamma \in \mathbb{R}_+^{E}\) denote the conductances on the edges $E$, with \({\gamma}_{pq} \in\mathbb{R}_+\) being the conductance of the edge \(\{p,q\} \in E\). Given a potential $u\in \mathbb{R}^{\overline D}$, the discrete Laplacian \(L_{\gamma}\) at a node \(p \in D\) is defined by
\[
(L_{\gamma} u)_p = \sum_{q \in N(p)} {\gamma}_{pq}(u_q - u_p),
\]
which is the net current flowing into the node \(p\) under our sign convention. Thus, in the absence of external sources, Kirchhoff's current law is expressed by \((L_\gamma u)_p=0\).

Now let the function \(f_p : \mathbb{R} \to \mathbb{R}\), for each $p\in D$, denote a node-dependent nonlinear reaction term, and let $f=(f_p)_{p\in D}$. One common choice for $f_p$ is the power-type nonlinearity, e.g., $f_p(u_p)=cu_p^3$ with $c\geq 0$, $p\in D$.
Such cubic reaction terms are classical in excitable-media models of FitzHugh--Nagumo type, which are often used as reduced descriptions of wave propagation in neural and cardiac tissues \cite{Franzone:2014}. These applications also motivate the study of discrete versions of the model, e.g., via suitable finite-dimensional discretization. The discrete counterpart may also shed light on the corresponding continuum inverse problem: the graph setting provides a finite-dimensional framework in which the main nonlinear and inverse mechanisms can often be analyzed more explicitly, e.g., localization, layer-stripping procedures, and the interplay between the background state and the linearized map.

Consider the following semilinear Dirichlet problem for the discrete Laplacian $L_\gamma$ on a connected graph \(G = (E,D, \partial D)\): find \(u \in \mathbb{R}^{\overline{D}}\) such that
\begin{equation}
  \left\{  \begin{aligned}
  (L_{\gamma} u)_p \;-\; f_p(u_p) &= 0, \quad p \in D,\\
u_p &= \varphi_p, \quad p \in \partial D,
\end{aligned}\right.
\label{eqn:Model_Nonlinear}
\end{equation}
where \(\varphi \in \mathbb{R}^{\partial D}\) denotes the Dirichlet boundary excitation. Under mild conditions on \(f\), problem \eqref{eqn:Model_Nonlinear} admits a unique solution \(u \in \mathbb{R}^{\overline{D}}\); see Theorem \ref{thm:well-posed} for the precise statement.
Given the unique solution \(u \in \mathbb{R}^{\overline{D}}\) to problem \eqref{eqn:Model_Nonlinear}, we can define the Neumann data \(\psi \in \mathbb{R}^{\partial D}\) by
\begin{equation}\label{eqn:flux}
\psi_p = \sum_{q\in N(p)}\gamma_{pq}(u_p-u_q),\quad p\in\partial D.
\end{equation}
Physically, \(\psi\) represents the outgoing current on the boundary $\partial D$.
Note that in the presence of a nonlinear reaction term \( f \), the Neumann boundary datum \(\psi \in \mathbb{R}^{\partial D}\) depends nonlinearly on both the conductivity \(\gamma\) and the Dirichlet data \(\varphi\). We define a nonlinear Dirichlet-to-Neumann (DtN) map
\begin{equation*}
F_{\gamma}: \mathbb{R}^{\partial D} \to \mathbb{R}^{\partial D},\quad \varphi \mapsto \psi.
\end{equation*}
In the absence of the semilinear term $f$, the DtN map $F_\gamma(\varphi)$ is linear in $\varphi$, and will be denoted by $\Lambda_\gamma$.

The discrete Calder\'{o}n problem on the graph $G=(E,D,\partial D)$ reads: Given some knowledge of the DtN map $F_\gamma$, is it possible to uniquely identify the conductivity $\gamma\in \mathbb{R}_+^E$? In the linear case (i.e., $f_p\equiv0$, for any $p\in D$), Curtis and Morrow \cite{CurtisMorrow:1990,CurtisMorrow:1991} established unique determination of the conductivity $\gamma$ from the DtN matrix $\Lambda_\gamma$ for 2D square lattices. These results have been extended in several directions, e.g., circular planar graphs \cite{CurtisMorrow:1994,CurtisIngermanMorrow:1998,Ingerman:2000}, multi-dimensional non-planar graphs \cite{LamPylyavskyy:2012,DengJin:2025}, single measurement (under the monotonicity assumption) \cite{ChungBerenstein:2005}, complex-valued conductivity \cite{BoyerGarzella:2016}, optimal tomography on graphs \cite{ChungGilbert:2017}, and internal data \cite{Corbett:2025}. One powerful idea in the uniqueness analysis \cite{CurtisMorrow:1990,CurtisMorrow:1991,CurtisMorrow:1994,DengJin:2025} is the layer-by-layer recovery of the conductivity $\gamma$ using suitable specialized boundary excitations. However, the nonlinear case in this graph setting has received much less attention. The nonlinearity of the model presents substantial challenges: the DtN map $F_\gamma$ is not only nonlinear but also has intricate properties. The specialized corner excitations no longer form a linear subspace but instead form a nonlinear graph.
Hence, the existing approach of deriving corner excitations by computing the kernel of a submatrix of the DtN matrix \cite{CurtisMorrow:1990,CurtisMorrow:1991} is intractable in the nonlinear case.

In this study, we investigate the nonlinear discrete Calder\'on problem associated with problem \eqref{eqn:Model_Nonlinear} on two-dimensional square lattices: determine the conductivity \(\gamma\in\mathbb R_+^E\) from finitely many Cauchy data pairs $(\varphi^{(i)},F_\gamma(\varphi^{(i)}))_{i=1}^N$
generated by the nonlinear DtN map \(F_\gamma\), thereby extending the result of Curtis and Morrow \cite{CurtisMorrow:1990} to the nonlinear
case. We present three uniqueness results for the inverse problem, distinguished by the type of available boundary data: (i) uniqueness from
conductivity-dependent corner voltage measurements, (ii) uniqueness from one nonlinear Cauchy data pair together with the linearized
DtN map $\Lambda_\gamma(\varphi)$ at the background boundary potential $\varphi$, and (iii) uniqueness for the fully nonlinear problem from finitely many
measurements taken near an arbitrary fixed boundary datum. More precisely, for \(m=|\partial D|\) and for any normalized basis
\(\{v_1,\ldots,v_m\}\) of \(\mathbb R^{\partial D}\), the \(m+1\) measurements:
$F_\gamma(\varphi)$ and  $F_\gamma(\varphi+t v_i)$, $i=1,\ldots,m$, determine \(\gamma\) for all sufficiently small \(t>0\), uniformly for
\((\gamma,\varphi)\) in compact admissible sets. See Theorems \ref{thm:gamma-recovery}, \ref{Thm:LinearizeApproach} and \ref{thm:finite-nonlinear-data} for the precise statements. These results address multiple types of boundary measurements, and to the best of our knowledge, represent the first theoretical guarantees for the semilinear discrete Calder\'{o}n problem. The analysis follows two distinct strategies depending on the type of available data: For corner excitations and linearized data, we extend the layer-by-layer slicing argument from the linear setting \cite{CurtisMorrow:1990,CurtisMorrow:1991,DengJin:2025}. The resulting proofs are constructive and lend themselves to reconstruction algorithms, whereas for general excitations, the analysis employs a perturbative $C^1$-stability argument for injective maps with full-rank differential.

Unlike the discrete Calder\'{o}n problem, continuous Calder\'{o}n-type problems with semilinear elliptic PDEs have been extensively studied in the last three decades; see, e.g., \cite{IsakovSylvester:1994,LassasLin:2021,HarrachLin:2023,Beretta:2026}. In a bounded domain $\Omega \subset \mathbb{R}^d$, $d\geq 3$, Isakov and Sylvester \cite{IsakovSylvester:1994} proved the unique determination of $a(x,u)$ in the semilinear elliptic equation $-\Delta u+a(x,u)=0$  if $a(x,0)=0$ and the DtN map is fixed. Lassas et al. \cite{LassasLin:2021} developed a high-order linearization method for the Calder\'{o}n-type inverse problems for semilinear equations with power-type nonlinearities, and obtained several uniqueness results from the nonlinear DtN map. Recently, Beretta et al. \cite{Beretta:2026} proved the unique determination of an anisotropic conductivity $\gamma$ in the semilinear model $-\nabla\cdot(\gamma\nabla u)+\alpha u^3=0$ from the Neumann-to-Dirichlet map. The proof relies on first-order linearization and a nontrivial pacing current. One key idea in this line of inquiry is linearization, to exploit the smooth dependence of the DtN map on the perturbations of the boundary data, and to analyze the (high-order) linearization. See \cite{Lassas:2025} for an overview of the (high-order) linearization technique. In the discrete case, we also employ linearization, but the proofs are constructive, using specialized excitations that generate localized potentials, instead of relying on the density argument as in the continuous case.

The remainder of the paper is organized as follows. In Section \ref{sec:direct}, we study the well-posedness of problem \eqref{eqn:Model_Nonlinear}, which ensures the well-definedness of the nonlinear DtN map \(F_\gamma(\varphi)\).
In Section \ref{sec:inverse}, we describe the inverse problem and state the main inverse results. The uniqueness proofs are given in Sections \ref{sec:corner-data}, \ref{sec:linearized-recovery} and \ref{sec:finite-data}.
In Table~\ref{tab:notation}, we list the symbols that are frequently used below.

\begin{table}[hbt!]
 \centering
 \caption{List of symbols and notations.}
\label{tab:notation}
 \begin{tabular}{p{2.5cm}p{9.5cm}}
 \toprule
 \textbf{Notation} & \textbf{Description} \\
 \midrule
$G = (E, D,\partial D)$ & graph with edge set $E$, interior node set $D$, boundary node set $\partial D$. \\
$\overline{D}$ & the set of all nodes: $\overline{D} = D \cup \partial D$. \\
$N(p)$ & the set of neighbors of the node $p$. \\
$\gamma = [{\gamma}_{pq}]$ & conductivity vector defined on edges, with ${\gamma}_{pq} > 0$ for $\{p,q\} \in E$. \\
$\varphi$ & Dirichlet boundary condition.\\
$\psi$ & Neumann boundary condition.\\
$L_{\gamma}$ & discrete graph Laplacian for the conductivity $\gamma$.\\
$f_p$ & nonlinear function at node $p \in D$. \\
$u$ & potential defined on $\overline{D}$. \\
$F_{\gamma}$ & nonlinear Dirichlet-to-Neumann (DtN) map. \\
$X_{\leq \ell}, X_\ell, X_{>\ell}$ & diagonal layers defined in \eqref{eqn:X-ell}, $X\in \{D, \overline{D}, \partial D\}$. \\
$V_k = V_{k,\gamma}$ & the $k$th corner-excitation set. \\
$\Lambda_{\gamma}(\varphi)$ & Fr\'{e}chet derivative of $F_{\gamma}$ at $\varphi$ (linearized DtN map). \\
\bottomrule
\end{tabular}
\end{table}

\section{Well-posedness of the direct problem}\label{sec:direct}

We establish that the semilinear problem \eqref{eqn:Model_Nonlinear} has a unique solution for every Dirichlet excitation $\varphi\in\mathbb{R}^{\partial D}$, which is needed for defining the nonlinear DtN map \(F_\gamma\). The analysis uses a variational argument: the solutions are the stationary points of an energy $\mathcal{E}_\gamma$, and under a monotonicity assumption on \(f_p\), $\mathcal{E}_\gamma$ is strictly convex and coercive on an affine space of functions satisfying the prescribed Dirichlet condition:
\begin{align}\label{eqn:adm-set}
 \mathbb{U} = \{ u \in \mathbb{R}^{\overline{D}} : u|_{\partial D} = \varphi \}.
\end{align}
The existence and uniqueness then follow from the argument for finite-dimensional convex minimization.

\begin{assumption}\label{assumption:monotone-f}
For each $p\in D$, $f_p$ is a non-decreasing continuous function.
\end{assumption}

Throughout, if \(S\subset X\), $X\in \{D,\partial D,\overline{D}\}$ and \(h\in \mathbb R^X\), then \(h|_S\) denotes the restriction of
\(h\) to \(S\). We define an energy functional $\mathcal{E}_{\gamma}$ that captures the total energy of the system, composed of a dissipation term and a term associated with the nonlinear reaction:
\begin{equation}\label{eqn:energy}
   \mathcal{E}_{\gamma}(u) =\mathcal{Q}(u) + \sum_{p \in D}  \mathcal{F}_p(u_p),\quad \mbox{with } \mathcal{Q}(u)=  \frac{1}{2} \sum_{{pq}\in E} {\gamma}_{pq} (u_p - u_q)^2, \ \mathcal{F}_p(t) = \int_0^t f_p(\tau) \, \mathrm{d}\tau.
\end{equation}
 It can be shown that problem \eqref{eqn:Model_Nonlinear} is equivalent to minimizing \(\mathcal{E}_{\gamma}\) over $\mathbb{U}$.
In fact, the stationary points of $ \mathcal{E}_{\gamma}$ are exactly the solutions of problem \eqref{eqn:Model_Nonlinear}.
\begin{lemma}\label{Lem:full-energy-stationary}
Suppose that each \(f_p\) is continuous.
Then \(u\in \mathbb R^{\overline D}\) solves problem \eqref{eqn:Model_Nonlinear} if and only if
\[
u|_{\partial D}=\varphi
\quad\text{and}\quad
(\nabla \mathcal E_\gamma(u))|_D=0.
\]
\end{lemma}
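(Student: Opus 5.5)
The plan is to compute the partial derivatives of $\mathcal E_\gamma$ with respect to the interior values $u_p$, $p\in D$, and compare them with the left-hand side of \eqref{eqn:Model_Nonlinear}. Since each $f_p$ is continuous, the fundamental theorem of calculus makes $\mathcal F_p(t)=\int_0^t f_p(\tau)\,\mathrm d\tau$ continuously differentiable with $\mathcal F_p'(t)=f_p(t)$. Also, $\mathcal Q$ is a quadratic polynomial in $u$. Hence $\mathcal E_\gamma\in C^1(\mathbb R^{\overline D})$, and $\nabla\mathcal E_\gamma(u)$ is defined componentwise.

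The key step is the formula for $\partial\mathcal Q/\partial u_p$ at an interior node $p\in D$. The variable $u_p$ enters $\mathcal Q$ only through the edges $\{p,q\}\in E$ with $q\in N(p)$. Each unordered edge appears exactly once in the sum over $E$, so differentiating $\tfrac12\gamma_{pq}(u_p-u_q)^2$ in $u_p$ gives $\gamma_{pq}(u_p-u_q)$. Thus
\[
\frac{\partial \mathcal Q}{\partial u_p}(u)=\sum_{q\in N(p)}\gamma_{pq}(u_p-u_q)=-(L_\gamma u)_p .
\]
The reaction sum contributes only the term $\mathcal F_p(u_p)$, whose derivative is $f_p(u_p)$. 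Therefore, for every $p\in D$,
\[
\bigl(\nabla\mathcal E_\gamma(u)\bigr)_p=-(L_\gamma u)_p+f_p(u_p)=-\bigl[(L_\gamma u)_p-f_p(u_p)\bigr].
\]

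The equivalence then follows directly. The interior equations in \eqref{eqn:Model_Nonlinear} hold exactly when $(\nabla\mathcal E_\gamma(u))|_D=0$, and the boundary condition $u_p=\varphi_p$, $p\in\partial D$, is the same condition as $u|_{\partial D}=\varphi$. Both directions of the ``if and only if'' come from this identity.

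There is no real obstacle here. The only points to handle carefully are the bookkeeping of unordered edges, so that no factor of $2$ appears, and the remark that boundary components of the gradient play no role. The latter is why the statement restricts the gradient to $D$: the boundary values are fixed in $\mathbb U$, and $\mathcal F_p$ is defined only for $p\in D$.
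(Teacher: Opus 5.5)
Your proposal is correct and takes essentially the same approach as the paper: compute $\partial_{u_p}\mathcal Q=-(L_\gamma u)_p$ and $\partial_{u_p}\sum_{r\in D}\mathcal F_r(u_r)=f_p(u_p)$ via the fundamental theorem of calculus, then read off the equivalence together with the boundary condition. Your added remarks on counting each unordered edge once (so no factor of $2$) and on the $C^1$ regularity of $\mathcal E_\gamma$ are sound refinements of the same argument.
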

\begin{proof}
Fix \(u\in \mathbb R^{\overline D}\). For any \(p\in D\), differentiating the quadratic part $\mathcal{Q}(u)$ gives
\[
\frac{\partial}{\partial u_p} \mathcal{Q}(u) = \sum_{q\in N(p)}\gamma_{pq}(u_p-u_q)
= -(L_\gamma u)_p.
\]
Moreover, by the fundamental theorem of calculus, we have
\[
\frac{\partial}{\partial u_p} \left(\sum_{r\in D} \mathcal{F}_r(u_r)\right) = \mathcal{F}_p'(u_p)=f_p(u_p).
\]
Hence, for every \(p\in D\),
\[
\frac{\partial}{\partial u_p}\mathcal E_\gamma(u) = -(L_\gamma u)_p + f_p(u_p).
\]
That is, $(\nabla \mathcal E_\gamma(u))_p=0$ if and only if $(L_\gamma u)_p-f_p(u_p)=0.$ Then it follows that $(\nabla \mathcal E_\gamma(u))|_D=0$  if and only if the interior equations in problem \eqref{eqn:Model_Nonlinear} hold for all \(p\in D\). Together with the boundary condition \(u|_{\partial D}=\varphi\), this proves the desired claim.
\end{proof}

The following convexity and coercivity results are crucial.
\begin{lemma}\label{lem:convexity-coercivity}
Let Assumption~\ref{assumption:monotone-f} hold, and let \(G\) be a connected finite graph with a nonempty boundary \(\partial D\). Then the
functional \(\mathcal{E}_\gamma\) is strictly convex and coercive on the admissible set $\mathbb{U}$.
\end{lemma}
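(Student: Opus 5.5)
We split \(\mathcal E_\gamma=\mathcal Q+\sum_{p\in D}\mathcal F_p\) and restrict both pieces to the affine set \(\mathbb U\). We parametrize \(\mathbb U\) by its interior values: writing \(u=(w,\varphi)\) with \(w\in\mathbb R^D\), the map \(w\mapsto \mathcal E_\gamma(w,\varphi)\) is a function on \(\mathbb R^D\). The approach is to show that the quadratic part is strictly convex and coercive and that the reaction part is convex and bounded below by an affine function. Adding the two then gives both claims.

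\textbf{Step 1: the reaction part is convex.} Each \(\mathcal F_p\) is \(C^1\) with derivative \(f_p\). Since \(f_p\) is non-decreasing by Assumption~\ref{assumption:monotone-f}, \(\mathcal F_p\) is convex on \(\mathbb R\). Consequently \(w\mapsto\sum_{p\in D}\mathcal F_p(w_p)\) is convex on \(\mathbb R^D\). Convexity also gives the supporting-line bound \(\mathcal F_p(t)\ge \mathcal F_p(0)+f_p(0)t=f_p(0)t\). Hence \(\sum_p\mathcal F_p(w_p)\ge -C|w|\) with \(C=|(f_p(0))_p|\).

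\textbf{Step 2: the quadratic part is strictly convex and coercive on \(\mathbb U\).} The restriction \(w\mapsto\mathcal Q(w,\varphi)\) is a quadratic polynomial \(\frac12 w^\top A w+b^\top w+c\). Here \(A=(-L_\gamma)|_{D\times D}\) is the Dirichlet Laplacian, so we must show \(A\) is positive definite. Take \(w\) with \(w^\top Aw=0\) and extend it by zero on \(\partial D\). Then \(\sum_{pq\in E}\gamma_{pq}(w_p-w_q)^2=0\). Since \(\gamma_{pq}>0\), \(w\) is constant along every edge. Because \(G\) is connected, \(w\) is constant on \(\overline D\), and since \(\partial D\neq\emptyset\), that constant is \(0\). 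So \(A\) has smallest eigenvalue \(\lambda>0\), and \(\mathcal Q(w,\varphi)\ge\frac\lambda2|w|^2-|b||w|+c\). Strict convexity follows from the positive definite Hessian.

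\textbf{Step 3: conclusion.} A strictly convex function plus a convex function is strictly convex, which gives strict convexity of \(\mathcal E_\gamma\) on \(\mathbb U\). Combining the bounds from Steps 1 and 2,
\[
\mathcal E_\gamma(w,\varphi)\ge \tfrac\lambda2|w|^2-(|b|+C)|w|+c\to\infty \quad\text{as } |w|\to\infty,
\]
which is coercivity.

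The main point needing care is Step 2: positive definiteness of the Dirichlet block. This is exactly where connectedness and \(\partial D\neq\emptyset\) enter, and also where we need that every interior node is linked to the boundary through some path. The rest is routine. Note that Step 1 only needs monotonicity and not growth of \(f_p\), since the linear lower bound is absorbed by the quadratic term.
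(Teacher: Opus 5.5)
Your proposal is correct and follows the paper's proof essentially step for step: positive definiteness of the Dirichlet block via zero extension, connectedness and $\partial D\neq\emptyset$; convexity of each $\mathcal F_p$ from the monotonicity of $f_p$; and coercivity from the smallest eigenvalue combined with the supporting-line bound $\mathcal F_p(t)\ge f_p(0)t$. The one step you leave implicit is the identity $w^\top A w=\sum_{\{p,q\}\in E}\gamma_{pq}(\widetilde w_p-\widetilde w_q)^2$, where $\widetilde w$ is the zero extension. You need it both for positive semidefiniteness and for your kernel argument, so it is worth stating explicitly, as the paper does.
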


\begin{proof}
Let \(u_D=u|_D\). Then it suffices to study the dependence of the energy \(\mathcal{E}_\gamma\) on \(u_D\).
Expanding the quadratic part \(\mathcal{Q}\) with respect to $u_D$ gives
\[
\mathcal{Q}(u)=\tfrac12 u_D^\top M_D u_D+c_1^\top u_D+c_2,
\]
where \(c_1\in \mathbb{R}^{D}\) and \(c_2\in\mathbb{R}\) depend only on \(\gamma\) and on the prescribed
boundary value \(\varphi\). The matrix \(M_D\in \mathbb{R}^{D\times D}\) is the Dirichlet graph
Laplacian, defined by
\begin{align}\label{eqn:graph-Lap}
(M_D)_{pq} = \begin{cases}
\displaystyle \sum_{r\in N(p)} {\gamma}_{pr}, & p=q,\\[6pt]
-{\gamma}_{pq}, & q\in N(p)\cap D,\\[4pt]
0, & \text{otherwise}.
\end{cases}
\end{align}
The matrix \(M_D\) is clearly symmetric.
First we show that \(M_D\) is positive definite. For
\(z\in\mathbb R^D\), let \(\widetilde z\) be its zero extension to
\(D\cup\partial D\). 
Then direct computation gives
\[
z^\top M_Dz
=
\sum_{\{p,q\}\in E}
\gamma_{pq}
\bigl(\widetilde z_p-\widetilde z_q\bigr)^2.
\]
Hence \(z^\top M_Dz\ge0\). If \(z^\top M_Dz=0\), then
$\widetilde z_p=\widetilde z_q$
for every $\{p,q\}\in E$. Since the graph \(G\) is connected, \(\widetilde z\) is constant on
\(D\cup\partial D\). Since \(\partial D\neq\varnothing\) and
\(\widetilde z=0\) on \(\partial D\), the constant is zero.
Thus \(z=0\), and \(M_D\) is positive definite. It follows that \(\mathcal Q\), as a function of \(u_D\),
is strictly convex. Moreover, since each \(f_p\) is continuous and
non-decreasing, the function
$\mathcal{F}_p(t):=\int_0^t f_p(s)\,{\rm d}s$
is convex. Therefore, the map
\[
u_D\longmapsto \sum_{p\in D}\mathcal{F}_p(u_p)
\]
is convex. Since the sum of a strictly convex function and a convex
function is strictly convex, \(\mathcal E_\gamma\) is strictly
convex on \(\mathbb U\). It remains to prove coercivity. Since \(M_D\) is positive definite,
 its smallest eigenvalue \(\lambda_*\) is positive. Then
\[
\mathcal Q(u)
\ge
\frac{\lambda_*}{2}\|u_D\|^2
-\|c_1\|\,\|u_D\|+c_2,
\]
where $\|\cdot\|$ denotes the Euclidean norm of vectors.
Moreover, by the convexity of \(\mathcal{F}_p\), we have
$$\mathcal{F}_p(t)\geq \mathcal{F}_p(0)+f_p(0)t=f_p(0)t
\geq -|f_p(0)||t|.$$
Thus, for some constant \(C_0>0\),
$
\sum_{p\in D}\mathcal{F}_p(u_p)\geq -C_0\|u_D\|$.
Consequently,
\[
\mathcal{E}_\gamma(u) = \mathcal{Q}(u) + \sum_{p\in D} \mathcal{F}_p(u_p) \geq \frac{\lambda_*}{2}\|u_D\|^2 - (\|c_1\|+C_0)\|u_D\|+c_2.
\]
Hence \(\mathcal{E}_\gamma(u)\to+\infty\) as \(\|u_D\|\to\infty\), i.e.,
\(\mathcal{E}_\gamma\) is coercive on \(\mathbb{U}\).
\end{proof}

The next result gives the well-posedness of the direct problem.
\begin{theorem}\label{thm:well-posed}
Under Assumption \ref{assumption:monotone-f}, for a connected graph \(G\) with a nonempty boundary \(\partial D\),
problem~\eqref{eqn:Model_Nonlinear} admits a unique solution \(u\in\mathbb R^{\overline D}\).
\end{theorem}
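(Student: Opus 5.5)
The proof will go through the variational characterization prepared in Lemmas \ref{Lem:full-energy-stationary} and \ref{lem:convexity-coercivity}. We identify the affine set \(\mathbb U\) in \eqref{eqn:adm-set} with \(\mathbb R^{D}\) via \(u\mapsto u_D=u|_D\), the boundary values being frozen at \(\varphi\). Under this identification \(\mathcal E_\gamma\) becomes a real-valued function on the finite-dimensional space \(\mathbb R^D\). Since each \(f_p\) is continuous, each \(\mathcal F_p\) is \(C^1\), so \(\mathcal E_\gamma\) is continuously differentiable in \(u_D\), and its gradient with respect to \(u_D\) is exactly \((\nabla\mathcal E_\gamma(u))|_D\).

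\emph{Existence.} By Lemma \ref{lem:convexity-coercivity}, \(\mathcal E_\gamma\) is coercive on \(\mathbb U\). Pick any \(u^0\in\mathbb U\). The sublevel set \(\{u\in\mathbb U:\mathcal E_\gamma(u)\le \mathcal E_\gamma(u^0)\}\) is nonempty, bounded (by coercivity), and closed (by continuity), hence compact in \(\mathbb R^D\). By Weierstrass's theorem \(\mathcal E_\gamma\) attains a minimum over this set at some \(u^*\), and \(u^*\) is then a global minimizer over \(\mathbb U\). Since \(u^*_D\) is an interior minimizer of a \(C^1\) function on the open set \(\mathbb R^D\), Fermat's rule gives \((\nabla\mathcal E_\gamma(u^*))|_D=0\). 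As \(u^*|_{\partial D}=\varphi\), Lemma \ref{Lem:full-energy-stationary} shows that \(u^*\) solves \eqref{eqn:Model_Nonlinear}.

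\emph{Uniqueness.} Let \(u\) be any solution of \eqref{eqn:Model_Nonlinear}. By Lemma \ref{Lem:full-energy-stationary}, \(u\in\mathbb U\) and \(u_D\) is a stationary point of the convex \(C^1\) function \(\mathcal E_\gamma\) on \(\mathbb R^D\). For a differentiable convex function,
\[
\mathcal E_\gamma(v)\ge \mathcal E_\gamma(u)+\nabla\mathcal E_\gamma(u)|_D\cdot(v_D-u_D)=\mathcal E_\gamma(u)
\]
for every \(v\in\mathbb U\), so every stationary point is a global minimizer. Strict convexity (Lemma \ref{lem:convexity-coercivity}) makes the minimizer unique. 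Indeed, if \(u\ne w\) were two minimizers with common minimal value \(m\), then \(\mathcal E_\gamma(\tfrac12(u+w))<m\), which is a contradiction. Hence any two solutions coincide.

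No step here is a genuine obstacle, since the heavy lifting is done by Lemma \ref{lem:convexity-coercivity}. The one point that needs care is the reduction to the unconstrained problem in \(u_D\). Only the interior components of the gradient must vanish, because the boundary components are constrained. This is exactly why Lemma \ref{Lem:full-energy-stationary} is formulated with \((\nabla\mathcal E_\gamma)|_D\), and why the first-order condition is applied in \(\mathbb R^D\) rather than \(\mathbb R^{\overline D}\).
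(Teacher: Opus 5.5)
Your proposal is correct and follows the same route as the paper: you minimize the continuous, coercive, strictly convex energy over \(\mathbb U\), apply the first-order condition in the interior variables \(u_D\), and invoke Lemma~\ref{Lem:full-energy-stationary}, with uniqueness coming from strict convexity. Your uniqueness step is slightly more complete than the paper's one-line remark, because you state explicitly that any solution is a stationary point of a convex \(C^1\) function and hence a global minimizer, which is what lets strict convexity exclude a second solution.
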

\begin{proof}
By Lemma~\ref{lem:convexity-coercivity}, the energy \(\mathcal{E}_\gamma\) is continuous, coercive, and strictly convex on \(\mathbb{U}\).
Since \(\mathbb{U}\) is a closed affine subspace of the finite-dimensional space $\mathbb{R}^{\overline{D}}$, \(\mathcal{E}_\gamma\) attains its minimum
at some \(u^\ast\in \mathbb{U}\). For every \(v\in\mathbb R^{\overline D}\) with \(v|_{\partial D}=0\), the function $t\mapsto \mathcal{E}_\gamma(u^\ast+t v)$ has a minimum at \(t=0\). Hence
\[
\frac{\rm d}{{\rm d}t}\mathcal{E}_\gamma(u^\ast+t v)\Big|_{t=0}=0,
\]
which is equivalent to $(\nabla \mathcal{E}_\gamma(u^\ast))|_{D}=0$. Thus, by Lemma~\ref{Lem:full-energy-stationary}, \(u^\ast\) solves problem~\eqref{eqn:Model_Nonlinear}. The uniqueness follows from the strict convexity of \(\mathcal{E}_\gamma\) on \(\mathbb{U}\) in Lemma \ref{lem:convexity-coercivity}.
\end{proof}

\section{The discrete Calder\'{o}n problem and main results}\label{sec:inverse}

In this section, we describe the formulation of the inverse problem on square lattices, and state the main uniqueness results as well as the general analysis strategy.

\subsection{The discrete Calder\'{o}n problem on square lattices}
Now we study the discrete Calder\'{o}n problem associated with the model \eqref{eqn:Model_Nonlinear} on square lattices; see Fig. \ref{fig:lattice_corner}(a) for a schematic illustration. This setting is widely studied for the discrete Calder\'{o}n problem \cite{CurtisMorrow:1990,CurtisMorrow:1991}. The square lattice $G=(E,D,\partial D)$ is defined below. The sets $D$ and $\partial D$ of interior nodes and boundary nodes are given respectively by
 \[
 D = \{(i,j) \mid i,j = 1,\dots,n\}\ \mbox{and}\
 \partial D = \{(0,j), (n+1,j), (i,0), (i,n+1) \mid i,j = 1,\dots,n\}.
 \] The edge set \(E\) is given by
\begin{equation}
  E = \bigl\{\{p,q\} \mid p=(i,j)\in D,\; q=(i',j') \in \overline{D},\; |i-i'|+|j-j'|=1\bigr\}.\label{def:E}
\end{equation}
To employ the layer-stripping argument \cite{CurtisMorrow:1990,DengJin:2025} in the uniqueness proof, we define a diagonal layering of the lattice. For a fixed \(\ell \ge 0\) and for any node set \(X \in \{D, \partial D, \overline{D}\}\), we define the diagonal layer $X_\ell$ and its cumulative layer $X_{\le \ell}$ respectively by
\begin{equation}\label{eqn:X-ell}
X_\ell := \{(i,j) \in X \mid i+j = \ell\}\quad\mbox{and} \quad
X_{\leq \ell} := \{(i,j) \in X \mid i+j \leq \ell\},\quad
X_{>\ell}:=X\setminus X_{\leq \ell}.
\end{equation}
This diagonal decomposition plays a central role in the reconstruction of conductances layer by layer. Note that \(\partial D_{\le k}\) is the part of the boundary $\partial D$ adjacent to the chosen corner, and \(\partial D_{>k}\) is the remaining part of the boundary $\partial D$.

\begin{figure}[hbt!]
    \centering
\begin{tabular}{cc}
    \begin{tikzpicture}[
        node/.style={rectangle, draw, minimum size=2mm},
        boundary/.style={node, fill=gray!20, thick},
        interior/.style={node, fill=white},
        edge/.style={thick},
        scale = .85
    ]
    
    \def\n{3} 
    \def\spacing{1.5cm}
    
    \pgfmathtruncatemacro{\nmone}{\n-1}
     \pgfmathtruncatemacro{\nmine}{\n+1}
    \foreach \i in {1,...,\n} {
        \foreach \j in {1,...,\n} {
            \node[interior] (d\i\j) at ({\i*\spacing}, {\j*\spacing}) {$(\i,\j)$};
        }
    }
    
    \foreach \j in {1,...,\n} {
        \node[boundary] (t\j) at ({0},{\j*\spacing}) {$(0,\j)$};
    }
    
    \foreach \j in {1,...,\n} {
        \node[boundary] (b\j) at ({(\nmine)*\spacing},{\j*\spacing}) {$(\nmine,\j)$};
    }
    
    \foreach \i in {1,...,\n} {
        \node[boundary] (l\i) at ({\i*\spacing},{0}) {$(\i,0)$};
    }
    
    \foreach \i in {1,...,\n} {
        \node[boundary] (r\i) at ({\i*\spacing},{(\nmine)*\spacing}) {$(\i,\nmine)$};
    }
    
    
    \foreach \i in {1,...,\n} {
        \foreach \j in {1,...,\nmone} {
            \pgfmathtruncatemacro{\nextj}{\j+1}
            \draw[edge] (d\i\j) -- (d\i\nextj);
        }
    }
    
    \foreach \i in {1,...,\nmone} {
        \foreach \j in {1,...,\n} {
            \pgfmathtruncatemacro{\nexti}{\i+1}
            \draw[edge] (d\i\j) -- (d\nexti\j);
        }
    }
    
    \foreach \j in {1,...,\n} {
        \draw[edge] (t\j) -- (d1\j);
    }
    
    \foreach \j in {1,...,\n} {
        \draw[edge] (b\j) -- (d\n\j);
    }
    
    \foreach \i in {1,...,\n} {
        \draw[edge] (l\i) -- (d\i1);
    }
    
    \foreach \i in {1,...,\n} {
        \draw[edge] (r\i) -- (d\i\n);
    }
    
    

    \end{tikzpicture}
    & 
    \begin{tikzpicture}[scale=1.]
    \draw[gray!20, very thin] (0,0) grid (5,5);
    \draw[ultra thick](1,4)--(1,5);
        \draw[ultra thick](2,3)--(2,5);
        \draw[ultra thick](3,2)--(3,5);
        \draw[ultra thick](4,1)--(4,5);
        \draw[ultra thick](5,0)--(5,5);
\draw[ultra thick](4,1)--(5,1);
        \draw[ultra thick](3,2)--(5,2);
        \draw[ultra thick](2,3)--(5,3);
        \draw[ultra thick](1,4)--(5,4);
        \draw[ultra thick](0,5)--(5,5);
    \node[red] at (4,0) {$\bullet$} ;




    \node[below] at (0,1) {$u^{(4)}_1$};
    \node[below] at (0,2) {$u^{(4)}_2$};
    \node[below] at (0,3) {$u^{(4)}_3$};
    \node[below] at (0,4) {$u^{(4)}_4$};
    \node[above right] at (1,4) {0};
    \node[above right] at (2,3) {0};
    \node[above right] at (3,2) {0};
    \node[above right] at (4,1) {0};
        \node[above right] at (1,5) {0};
    \node[above right] at (2,4) {0};
    \node[above right] at (3,3) {0};
    \node[above right] at (4,2) {0};
     \node[above right] at (5,1) {0};
    \node[below] at (4,0) {$u^{(4)}_0$};
    \node[below] at (1,0) {0};
    \node[below ] at (2,0) {0};
    \node[below ] at (3,0) {0};
    \node[below] at (0,0) {(0,0)};
     \node[below] at (5,0) {(5,0)};
      \node[below] at (0,5) {(0,5)};
    \draw[ultra thick, blue, -{Stealth[length=4mm, width=3mm]}] (1,0) -- (1,1);
    \draw[ultra thick, blue, -{Stealth[length=4mm, width=3mm]}] (2,0) -- (2,1);
    \draw[ultra thick, blue, -{Stealth[length=4mm, width=3mm]}] (3,0) -- (3,1);
    \draw[ultra thick, blue, -{Stealth[length=4mm, width=3mm]}] (4,0) -- (4,1);

    \draw[ultra thick, blue, -{Stealth[length=4mm, width=3mm]}] (1,1) -- (0,1);
    \draw[ultra thick, blue, -{Stealth[length=4mm, width=3mm]}] (1,2) -- (0,2);
    \draw[ultra thick, blue, -{Stealth[length=4mm, width=3mm]}] (1,3) -- (0,3);
    \draw[ultra thick, blue, -{Stealth[length=4mm, width=3mm]}] (1,4) -- (0,4);

    \draw[ultra thick, blue, -{Stealth[length=4mm, width=3mm]}] (4-1+1,1) -- (4-1,1); 
    \draw[ultra thick, blue, -{Stealth[length=4mm, width=3mm]}] (3,1) -- (3,2);
    \draw[ultra thick, green!50!black, ->] (3-0.6,1-0.8) -- (3-0.01,1); 

    \draw[ultra thick, blue, -{Stealth[length=4mm, width=3mm]}] (4-2+1,1) -- (4-2,1); 
    \draw[ultra thick, blue, -{Stealth[length=4mm, width=3mm]}] (2,1) -- (2,2);
    \draw[ultra thick, green!50!black, ->] (2-0.6,1-0.8) -- (2-0.01,1); 

    \draw[ultra thick, blue, -{Stealth[length=4mm, width=3mm]}] (4-2+1,2) -- (4-2,2); 
    \draw[ultra thick, blue, -{Stealth[length=4mm, width=3mm]}] (2,2) -- (2,3);
    \draw[ultra thick, green!50!black, ->] (2-0.6,2-0.8) -- (2-0.01,2); 

    \draw[ultra thick, blue, -{Stealth[length=4mm, width=3mm]}] (4-3+1,1) -- (4-3,1); 
    \draw[ultra thick, blue, -{Stealth[length=4mm, width=3mm]}] (1,1) -- (1,2);
    \draw[ultra thick, green!50!black, ->] (1-0.6,1-0.8) -- (1-0.01,1); 

    \draw[ultra thick, blue, -{Stealth[length=4mm, width=3mm]}] (4-3+1,2) -- (4-3,2); 
    \draw[ultra thick, blue, -{Stealth[length=4mm, width=3mm]}] (1,2) -- (1,3);
    \draw[ultra thick, green!50!black, ->] (1-0.6,2-0.8) -- (1-0.01,2); 

    \draw[ultra thick, blue, -{Stealth[length=4mm, width=3mm]}] (4-3+1,3) -- (4-3,3); 
    \draw[ultra thick, blue, -{Stealth[length=4mm, width=3mm]}] (1,3) -- (1,4);
    \draw[ultra thick, green!50!black, ->] (1-0.6,3-0.8) -- (1-0.01,3); 
\end{tikzpicture}\\
(a) square lattice & (b) corner excitation
\end{tabular}
\caption{(a)
schematic illustration of  the square lattice: 
$\overline{D} = D \cup \partial D$, with interior nodes $D$ (white) and boundary nodes $\partial D$ (gray), and the edges connect nodes; and (b) corner excitations.}
\label{fig:lattice_corner}
\end{figure}

The goal of the discrete Calder\'{o}n problem is to determine the conductivity $\gamma \in \mathbb{R}_+^{E}$ from a finite set of DtN data pairs $(\varphi^{(i)}, F_{\gamma}(\varphi^{(i)}))_{i=1}^N$ (which is strictly less informative than the full nonlinear DtN map $F_{\gamma}(\varphi)$). The inverse problem is mathematically very challenging due to the nonlinearity of the direct problem, and the nonlinearity of the DtN map $F_{\gamma}(\varphi)$ with respect to the conductivity $\gamma$.

\subsection{Road map of the uniqueness analysis}

The analysis strategy depends on the type of available data. The common geometric ingredient in the first two settings is a diagonal layer-stripping procedure, pioneered by Curtis and Morrow \cite{CurtisMorrow:1990,CurtisMorrow:1991}. Specifically, starting from a corner of the lattice, one
uses boundary excitations whose associated potentials vanish beyond a prescribed diagonal. The discrete equations then determine the currents
across the next layer and, provided that the relevant potential differences do not vanish, the conductivities connecting new layers can be determined. Iterating this argument propagates the reconstruction through the entire lattice. More specifically, we develop the relevant uniqueness analysis in three steps.

First, in Section \ref{sec:corner-data}, we construct nonlinear corner excitations adapted to the unknown conductivity $\gamma$. The corresponding potentials are localized near one corner and lead to a constructive layer-by-layer recovery of the conductivity $\gamma$; see Theorem~\ref{thm:gamma-recovery} for the precise statement. This result provides an extension of the technique for the discrete Calder\'{o}n problem for the graph Laplacian \cite{CurtisMorrow:1990,CurtisMorrow:1991,DengJin:2025} to the semilinear case, and is important conceptually and technically. However, the prescribed excitations depend on the unknown conductivity $\gamma$. This leads to the question, answered in Section 6, whether finitely many a priori prescribed nonlinear measurements suffice.

Second, in Section~\ref{sec:linearized-recovery}, we investigate the case of linearizing the
nonlinear DtN map $F_\gamma(\varphi)$ at an arbitrary background boundary potential $\varphi$, which yields the linearized DtN map $\Lambda_\gamma(\varphi)$. Although the linearized corner excitations still depend on $\gamma$ through $\Lambda_\gamma(\varphi)$, the linearity of the perturbation problem restores the linear subspace structure of the Curtis-Morrow construction. Consequently, the required corner excitations can be computed from the known linearized DtN map $\Lambda_\gamma(\varphi)$, without prior knowledge of $\gamma$. We prove that the Fr\'echet derivative
$D_\varphi F_\gamma(\varphi)$ coincides with the DtN map $\Lambda_{\gamma}(\varphi)$ of the linearized problem, and that $\Lambda_{\gamma}(\varphi)$ allows us to construct the required corner excitations directly. Together with one pair of nonlinear Cauchy data $(\varphi,F_\gamma(\varphi))$, the map $\Lambda_\gamma(\varphi)$ uniquely determines both the conductivity $\gamma$ and the background potential $u_{\gamma,\varphi}$; see Theorem~\ref{Thm:LinearizeApproach} for the precise statement. The proof relies on suitably adapting the layer-stripping argument, and the approach is constructive in nature. The argument also yields a local \(C^{r-1}\) left inverse for the ideal measurement map $H_0(\gamma,\varphi) = \bigl(\Lambda_\gamma(\varphi),F_\gamma(\varphi),\varphi\bigr)$,
and in particular shows that the differential \(D H_0\) has full rank on a suitable admissible set.

Third and last, in Section \ref{sec:finite-data}, we replace the ideal linearized datum \(\Lambda_\gamma(\varphi)\) by finite differences of the nonlinear DtN map \(F_\gamma(\varphi)\). This gives a finite-measurement result in the nonlinear case. Let \(m=|\partial D|\). Then for any normalized basis
\(\{v_1,\ldots,v_m\}\subset \mathbb{S}^{m-1}\) of \(\mathbb R^{\partial D}\), and the \(m+1\) nonlinear measurements
$F_\gamma(\varphi)$ and $F_\gamma(\varphi+t v_i)$ for $i=1,\ldots,m$, taken near any \(\varphi\in \mathbb{R}^{\partial D}\), uniquely determine the conductivity \(\gamma\) for all sufficiently small \(t>0\); see Theorem~\ref{thm:finite-nonlinear-data} for the precise statement. The proof combines the local left invertibility of $H_0$, Taylor expansion of the nonlinear DtN map \(F_\gamma(\varphi)\) in the boundary datum $\varphi$, and a quantitative stability result for injective maps with full-rank differential.

Collectively these three results provide affirmative answers to the uniqueness issues of the nonlinear discrete Calder\'{o}n problem using finitely many Cauchy data pairs $(\varphi^{(i)},F_\gamma (\varphi^{(i)}))_{i=1}^N$ under certain conditions on the boundary excitations $(\varphi^{(i)})_{i=1}^N$.

\section{Corner data and nonlinear layer stripping}
\label{sec:corner-data}

In this section we prove a uniqueness result for a specialized family of boundary data via a nonlinear layer stripping procedure. We use a class of ``lower-left corner data,'' for which the potential $u_{\gamma,\varphi}$ localizes near the lower-left corner of the lattice.
The localization property in Lemma \ref{lem:corner-data-support} shows that the potential $u$ actually vanishes in \(\overline D_{>k}\). Then the localization property and the equations on the two diagonals \(D_{k+1}\) and \(D_k\) yield a triangular recursion for the currents across the next layer, and allow recovering the conductivities on the current layer from Ohm's law. More specifically, the proof relies on
four auxiliary facts. First, corner data give the support localization in Lemma \ref{lem:corner-data-support}. Second, for each prescribed voltage on the lower side of
the corner, there is a unique complementary voltage on the left side that produces a compatible corner datum, cf. Lemma \ref{lem:Characterization lemma}. Third, once the conductivities have been recovered in a corner region, the potential in
that region can be reconstructed from the local Cauchy data, cf. Lemma \ref{lem:PotentialDetermination}. Finally, the potential does not vanish at any node of the active diagonal for the special corner data used in the reconstruction, cf. Lemma \ref{Lemma:NoneZero}. These ingredients are then combined in Theorem~\ref{thm:gamma-recovery}.

\begin{definition}[Lower-left corner data]\label{def:corner-data}
For \(1\leq k\leq n+1\), a boundary datum \(\varphi\in \mathbb R^{\partial D}\) is called a
lower-left \(k\)-corner datum for the conductivity \(\gamma\) if
$\varphi|_{\partial D_{>k}}=0$  and $F_\gamma(\varphi)|_{\partial D_{>k}}=0$.
The set of lower-left \(k\)-corner data is denoted by
\[
V_k\equiv V_{k,\gamma}:=\{\varphi\in \mathbb R^{\partial D}:
\varphi|_{\partial D_{>k}}=0,\ F_\gamma(\varphi)|_{\partial D_{>k}}=0\}.
\]
\end{definition}

\begin{assumption}\label{assumption:fp-zero}
For every \(p \in D\), \(f_p\) is continuous, non-decreasing, and satisfies \(f_p(0)=0\).
\end{assumption}

The definition of a corner datum $\varphi$ requires both the imposed voltage and the resulting boundary current vanish away from the corner. The next lemma shows that these two boundary conditions force the potential $u$ to vanish identically beyond the diagonal $D_k$; see Fig. \ref{fig:lattice_corner}(b) for a schematic illustration. This localization property lays the basis for the layer-stripping argument.

\begin{lemma}\label{lem:corner-data-support}
Let Assumption \ref{assumption:fp-zero} hold. For
\(1\leq k\leq n+1\), let \(\varphi\in V_{k}\), and \(u\) be the solution of problem \eqref{eqn:Model_Nonlinear} with the excitation  \(\varphi\). Then $u_p=0$ for every $p\in D_{>k}$. That is, $\operatorname{supp}(u)\subset D_{\leq k}\cup \partial D_{\leq k}$.
\end{lemma}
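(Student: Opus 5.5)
The plan is the classical Curtis--Morrow unique continuation argument for Cauchy data: propagate zero values inward, one diagonal at a time, starting from the far corner $(n+1,n+1)$. Each step uses one of the discrete equations $(L_\gamma u)_p = f_p(u_p)$ as a formula that solves for a single unknown neighbor. We show by downward induction on $\ell$, from $\ell=2n+1$ (or $2n$) down to $\ell=k+1$, that $u$ vanishes on $\overline D_{\geq \ell}$. Here $\overline D_{\ge\ell}$ means all nodes with $i+j\ge \ell$; no boundary nodes with $i+j> 2n+1$ exist. The base information comes from $\varphi\in V_k$: we have $u_p=\varphi_p=0$ for $p\in\partial D_{>k}$. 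Also $\psi_p=\gamma_{pq}(u_p-u_q)=0$ with $N(p)=\{q\}$ and $\gamma_{pq}>0$, so the unique interior neighbor $q$ of each $p\in\partial D_{>k}$ satisfies $u_q=0$.

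The first step is to use these boundary conditions to get zeros along the outer frame. Every interior node adjacent to the right or top boundary with index sum $>k$ lies next to a boundary node in $\partial D_{>k}$ (take $(n,j)$ next to $(n+1,j)$, whose index sum is one larger). Hence $u=0$ at all such frame nodes. The interior nodes of $D_{>k}$ next to the bottom and left boundary also get $u=0$, provided their boundary neighbor lies in $\partial D_{>k}$.

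The core step handles an interior node $p=(i,j)\in D_{>k}$, working on diagonals $i+j=\ell$ from large $\ell$ downward. Assume $u$ vanishes on $\overline D_{\ge \ell}$ (already known on the frame). Consider $p=(i,j)$ with $i+j=\ell$ and $i,j\ge1$, at which $u_p=0$ by hypothesis. Its equation reads
\[
\sum_{q\in N(p)}\gamma_{pq}(u_q-u_p)=f_p(u_p)=f_p(0)=0 ,
\]
using Assumption~\ref{assumption:fp-zero}. The neighbors $(i+1,j)$, $(i,j+1)$ lie on diagonal $\ell+1$, where $u=0$, and $u_p=0$ as well. So the equation reduces to $\gamma_{p,(i-1,j)}u_{(i-1,j)}+\gamma_{p,(i,j-1)}u_{(i,j-1)}=0$. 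This gives two unknowns on diagonal $\ell-1$ per equation, which is not yet triangular.

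To make it triangular, sweep along the diagonal. At the end of diagonal $\ell$ nearest a boundary, one of these two neighbors is either a boundary node of $\partial D_{>k}$ or an already-known frame zero, which forces the other neighbor to vanish. Proceeding along the diagonal, each equation then has only one new unknown. The conductivity on that edge is positive, so that unknown vanishes. The nodes of diagonal $\ell-1$ are thus determined successively: the equation at $(i,j)$ determines $u_{(i-1,j)}$ once $u_{(i,j-1)}$ is known, starting from the end where $(i,0)$ is a boundary zero (valid since $\ell-1>k$). This yields $u=0$ on $\overline D_{\ge \ell-1}$. Iterating down to $\ell-1=k+1$ gives the claim.

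\textbf{Main obstacle.} The hardest part is the bookkeeping of this induction. The order of the sweep must ensure that, when the equation at node $p$ is used, exactly one neighbor value is unknown, including near the right and top frame where diagonals are truncated. The base of the induction must also be checked: diagonals $\ell\ge n+2$, where the interior part of the diagonal does not touch the bottom or left boundary. For these I would instead start the sweep from the nodes $(n,\ell-n)$, using the right-frame zeros. I would do the bookkeeping carefully on a small lattice first and then state the general ordering.
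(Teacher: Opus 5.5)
Your proof is correct, but it orders the propagation of zeros differently from the paper. Both arguments are discrete unique continuation from zero Cauchy data, using $f_p(0)=0$ and $\gamma_{pq}>0$.

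The paper inducts on the row index. Row $n$ of $D_{>k}$ vanishes by the zero current on the top side of $\partial D_{>k}$. For $(i,j)\in D_{>k}$ with $j<n$, it uses the equation at the node $(i,j+1)$ directly above. The other neighbours of that node, $(i-1,j+1)$, $(i+1,j+1)$, $(i,j+2)$, all lie in higher rows of $\overline D_{>k}$ (note $(i-1)+(j+1)=i+j>k$). So each equation has exactly one unknown, there is no case split, and only the top Neumann data are used.

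Your diagonal induction gives two unknowns on $\overline D_{\ell-1}$ per equation, so it needs the inner sweep and the case distinction you describe. For $\ell\le n+1$ the chain starts from the Dirichlet zero at $(\ell-1,0)\in\partial D_{>k}$. For $\ell\ge n+2$ it starts from the frame node $(n,\ell-1-n)$, which vanishes by the zero current at $(n+1,\ell-1-n)$. For $\ell=2n+1$ there are no equations on $D_\ell$, but the lone interior node $(n,n)$ of the next diagonal is a frame node, so your frame step covers it. Both chains close consistently, so the bookkeeping you flag as the main obstacle is already settled by what you wrote.

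Your ordering has the merit of running along the same diagonal layers and the same two-term relation $\gamma_{p,(i-1,j)}u_{(i-1,j)}+\gamma_{p,(i,j-1)}u_{(i,j-1)}=0$ that drive Lemma~\ref{Lemma:NoneZero} and the current recursion in Theorem~\ref{thm:gamma-recovery}. The paper's row ordering is shorter, automatically triangular, and uses less of the boundary data.
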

\begin{proof}
By the definition of $V_k$, \(\varphi\in V_k\) implies $u=0$ on $\partial D_{>k}$ and the Neumann data vanish there. We prove the claim by descending induction on the row index.
For every interior node $(i,j)\in D$ with $i+j>k$ and $j=n$, the top boundary node $(i,n+1)\in\partial D_{>k}$ has zero voltage and zero current. Since it has the singleton neighbor $(i,n)$, the boundary flux formula gives $u_{(i,n)}=0$.

Now fix $j<n$ and assume that all interior nodes $(i',j')$ with $j'>j$ and $i'+j'>k$ have zero potential. Let $(i,j)\in D$ satisfy $i+j>k$. The node $p=(i,j+1)$ is an interior zero node by the induction hypothesis. Its other three neighbors, $(i-1,j+1)$, $(i+1,j+1)$, and $(i,j+2)$, are also zero: whenever a coordinate is on the boundary, this follows from the zero boundary data, and otherwise it follows from the induction hypothesis. Evaluating the equation at $p$ and using $f_p(0)=0$ gives
\[
0=\gamma_{p,(i,j)}(u_{(i,j)}-u_p),
\]
so $u_{(i,j)}=0$ because the conductance is positive. Descending over $j=n,n-1,\ldots,1$ proves $u=0$ on $D_{>k}$.
\end{proof}

Next we partition the set \(\partial D_{\leq k}\) into two parts:
\begin{equation*}
J_{k,1}:=\partial D_{\le k}\cap \{(i,j):j=0\}\quad\mbox{and} \quad
J_{k,2}:=\partial D_{\le k}\cap \{(i,j):i=0\},
\end{equation*}
i.e., the horizontal part and the vertical part of the boundary $\partial D_{\leq k}$.

The next lemma shows that any vector in the set \( V_k \) can be represented as a graph over $\mathbb{R}^{ J_{k,1}}$:  for any $\varphi_1\in \mathbb{R}^{ J_{k,1}}$, there exists a unique $\varphi_2\in \mathbb{R}^{ J_{k,2}}$ such that $(\varphi_1,\varphi_2) \in V_k $,
with $(\varphi_1,\varphi_2)$ being the Dirichlet boundary condition that takes the values given by $\varphi_1$ on  $J_{k,1}$, $\varphi_2$ on  $J_{k,2}$, and $0$ elsewhere.

\begin{lemma}\label{lem:Characterization lemma}
Under Assumption \ref{assumption:fp-zero}, for each \(1\leq k\leq n\) and each \(\varphi_{k,1} \in \mathbb R^{J_{k,1}}\), there exists a unique
$\varphi_{k,2}\equiv \varphi_{k,2}(\varphi_{k,1}, \gamma) \in \mathbb R^{J_{k,2}}$
such that $(\varphi_{k,1}, \varphi_{k,2}) \in V_k$. Moreover, if each \(f_p\) is \(C^r\), then the map $(\varphi_{k,1}, \gamma)\mapsto\varphi_{k,2}(\varphi_{k,1}, \gamma)$ is also \(C^r\).
\end{lemma}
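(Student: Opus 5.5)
The plan is to build the potential explicitly by a diagonal marching scheme starting from the corner. The scheme proves existence and uniqueness of \(\varphi_{k,2}\) together, and it also yields the \(C^r\) dependence.

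\textbf{Reduction.} Fix \(1\le k\le n\) and \(\varphi_{k,1}\in\mathbb R^{J_{k,1}}\).

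For uniqueness, suppose \((\varphi_{k,1},\varphi_{k,2})\in V_k\) with solution \(u\). By Lemma~\ref{lem:corner-data-support}, \(u=0\) on \(\overline D_{>k}\). Hence \(u\) satisfies the following system:
\begin{itemize}[label={}]
\item the interior equations \((L_\gamma u)_p-f_p(u_p)=0\) for all \(p\in D_{\le k+1}\);
\item \(u=0\) on \(\overline D_{>k}\);
\item \(u=\varphi_{k,1}\) on \(J_{k,1}\).
\end{itemize}
I will show that this system determines \(u\) on \(\overline D_{\le k}\) uniquely, and in particular determines \(u|_{J_{k,2}}=\varphi_{k,2}\).

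For existence, take the function \(u\) produced by the recursion and extend it by zero. It satisfies every interior equation:
\begin{itemize}[label={}]
\item on \(D_{>k+1}\) all nodes involved are zero, and \(f_p(0)=0\);
\item on \(D_{\le k+1}\) the equations hold by construction.
\end{itemize}
So \(u\) solves \eqref{eqn:Model_Nonlinear} with Dirichlet datum \((\varphi_{k,1},\varphi_{k,2})\), and by Theorem~\ref{thm:well-posed} it is \emph{the} solution. The Neumann data on \(\partial D_{>k}\) vanish, because each such boundary node and its unique interior neighbour both lie in \(\overline D_{>k}\), where \(u=0\). (Here \(k\le n\) is used: the top and right boundary nodes lie in \(\partial D_{>k}\).) Thus \((\varphi_{k,1},\varphi_{k,2})\in V_k\).

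\textbf{The recursion.} I process the diagonals \(\ell=k+1,k,\dots,2\) in decreasing order. When diagonal \(\ell\) is processed, \(u\) is already known on \(\overline D_{\ge \ell}\).

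Take a node \(p=(i,j)\in D_\ell\). Its neighbours \((i+1,j)\) and \((i,j+1)\) lie in layer \(\ell+1\) and are known, and \(u_p\) is known. The equation at \(p\) therefore reads
\[
\gamma_{p,(i-1,j)}u_{(i-1,j)}+\gamma_{p,(i,j-1)}u_{(i,j-1)}=R_p,
\]
where \(R_p\) is an explicit expression in known values, in \(\gamma\), and in \(f_p(u_p)\).

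I sweep the nodes of \(D_\ell\) in the order \(j=1,2,\dots,\ell-1\):
\begin{enumerate}
\item At \(j=1\), the neighbour \((\ell-1,0)\in J_{k,1}\) is prescribed by \(\varphi_{k,1}\). Since \(\gamma_{p,(\ell-2,1)}>0\), the equation gives \(u_{(\ell-2,1)}\).
\item At each subsequent \(j\), the neighbour \((i,j-1)\) was found in the previous step, so dividing by the positive conductance gives \((i-1,j)\).
\item The last node is \((1,\ell-1)\). It yields the boundary value \(u_{(0,\ell-1)}\), i.e.\ the \((\ell-1)\)-th component of \(\varphi_{k,2}\).
\end{enumerate}

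Note that \(D_{k+1}\subset D\) because \(k\le n\). Each step is forced, since every conductance is positive, so the resulting \(u\) is unique. As \(\ell\) runs from \(k+1\) down to \(2\), every node of \(\overline D_{\le k}\) outside \(J_{k,1}\) is determined, and all of \(J_{k,2}=\{(0,1),\dots,(0,k)\}\) is reached.

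\textbf{Regularity and main obstacle.} Each update divides by a positive conductance. It composes only additions, multiplications by components of \(\gamma\), the maps \(f_p\), and the map \(\gamma\mapsto 1/\gamma_{pq}\) on \(\mathbb R_+\). Hence \((\varphi_{k,1},\gamma)\mapsto\varphi_{k,2}\) is a finite composition of \(C^r\) maps whenever each \(f_p\in C^r\).

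The main obstacle is the index bookkeeping. I must check that each sweep produces exactly one unknown per equation, that the boundary nodes met (the \((\ell-1,0)\) and the \((0,\ell-1)\)) are precisely \(J_{k,1}\) and \(J_{k,2}\), and that nodes on the top and right edges cause no issue. I will handle the case \(\ell=k+1\) separately: there \(u_p=0\), so the equation reduces to \(\gamma_{p,(i-1,j)}u_{(i-1,j)}+\gamma_{p,(i,j-1)}u_{(i,j-1)}=0\).
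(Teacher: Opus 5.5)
Your proposal is correct and follows essentially the same route as the paper: a diagonal marching from $D_{k+1}$ down to $D_2$ (the paper nominally runs to $D_1$, which is empty), sweeping each diagonal by increasing column index and solving each interior equation for the left neighbour. As in the paper, uniqueness comes from Lemma~\ref{lem:corner-data-support}, existence from extending by zero and invoking Theorem~\ref{thm:well-posed}, and $C^r$ regularity from the finite composition of $C^r$ operations. Your explicit bookkeeping of each sweep, which starts from the prescribed lower neighbour $(\ell-1,0)\in J_{k,1}$ and ends at $(0,\ell-1)\in J_{k,2}$, only spells out what the paper states more briefly.
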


\begin{proof}
Fix \(\varphi|_{J_{k,1}} = \varphi_{k,1}\). The construction proceeds
inductively from the diagonal \(k+1\) downwards to \(1\). Set
$u_p = 0$ for $p \in \overline{D}_{>k}$  and
$u_p = (\varphi_{k,1})_p$ for $p \in J_{k,1}$. Then for each \(k' = k+1, k, \dots, 1\), we traverse all interior nodes
\(p = (i,j)\) on \(D_{k'}\) with an increasing column index \(j\).
Note that equation \eqref{eqn:Model_Nonlinear} at $p$ reads
\[
\sum_{q \in N(p)} {\gamma}_{pq} (u_q - u_p) = f_p(u_p),
\]
with $N(p)=\{(i-1,j),(i,j+1),(i+1,j),(i,j-1)\}$. The potentials at the last three nodes have already been determined. Hence, by setting
\[
s_p:= \sum_{\substack{q \in N(p) \setminus\{(i-1,j)\}}}{\gamma}_{pq}(u_q-u_p),
\]
the equation at \(p\) reads
\[
{\gamma}_{p(i-1,j)} \bigl(u_{(i-1,j)}-u_p\bigr)+s_p=f_p(u_p)\quad\mbox{i.e.},\quad  u_{(i-1,j)} = u_p+\frac{f_p(u_p)-s_p}{{\gamma}_{p(i-1,j)}},
\]
since \({\gamma}_{p(i-1,j)}>0\). 
After finitely many steps all potential values on \(J_{k,2}\) can be determined.
Set $\varphi_{k,2}:=u|_{J_{k,2}}$, and let \(\varphi=(\varphi_{k,1},\varphi_{k,2})\in\mathbb R^{\partial D}\).
By the construction, \(u\) satisfies \eqref{eqn:Model_Nonlinear} at all nodes in $D_{\leq k+1}$. For any $p\in
D_{>k+1}$, $u_q=0$ for all $q\in N(p)$, and the equation
also holds because \(f_p(0)=0\). Thus \(u\) is the unique solution of problem \eqref{eqn:Model_Nonlinear} with the boundary value \(\varphi\), cf. Theorem \ref{thm:well-posed}. Moreover, since \(u=0\) on \(\overline D_{>k}\), $
F_\gamma(\varphi)|_{\partial D_{>k}}=0, $ and thus $(\varphi_{k,1},\varphi_{k,2})\in V_k$.
If another complementary datum $\widetilde \varphi_{k,2}\in \mathbb{R}^{J_{k,2}}$ produced a datum in $V_k$, Lemma \ref{lem:corner-data-support} would force its solution to vanish on 
$D_{>k}$. Then the preceding recursion, beginning with the prescribed $\varphi_{k,1}$, determines successively all remaining values, including $\varphi_{k,2}$. Hence the complement is unique. Finally, each step involves only arithmetic operations, division by
positive conductivities, and evaluations of \(f_p\). Since there are only finitely many steps, the map $(\varphi_{k,1},\gamma) \mapsto \varphi_{k,2}(\varphi_{k,1},\gamma)$
is the composition of finitely many \(C^r\) operations when each \(f_p\) is \(C^r\). Hence the resulting map is \(C^r\).
\end{proof}

\begin{remark}
By Lemma \ref{lem:Characterization lemma}, for each \(1\leq k\leq n\), the set \(V_k\) is the graph of a unique map
$ \varphi_{k,1}\mapsto \varphi_{k,2}(\varphi_{k,1},\gamma)$. Equivalently, \(\varphi_{k,2}\) is the unique element of
\(\mathbb R^{J_{k,2}}\) such that, for the boundary datum \(\varphi\) defined by
$\varphi|_{J_{k,1}}=\varphi_{k,1}$, $\varphi|_{J_{k,2}}=\varphi_{k,2}$ and $\varphi|_{\partial D_{>k}}=0$, there holds
$F_\gamma(\varphi)|_{\partial D_{>k}}=0$.
\end{remark}

When each \(f_p\) is linear, i.e., $f_p(s)=c_p s$, the DtN map \(F_\gamma(\varphi)\) is linear in $\varphi$, denoted by
$\Lambda_\gamma:\mathbb R^{\partial D}\to \mathbb R^{\partial D}$. Then \(V_k\) is a linear subspace, and the map furnished in Lemma
\ref{lem:Characterization lemma}
 can be made more explicit. Let \(\mathfrak{R}_{>k}:\mathbb{R}^{\partial D}\to \mathbb {R}^{\partial D_{>k}}\) be the restriction map, and let \(\mathfrak{E}_{k,i}:\mathbb R^{J_{k,i}}\to \mathbb R^{\partial D}\), $i=1,2$, be the extension-by-zero maps. The next result gives an explicit representation for \(\varphi_{k,2}(\varphi_{k,1},\gamma)\). It plays a crucial role in Section \ref{sec:linearized-recovery}: after linearization,
it allows constructing the required corner excitations directly from $\Lambda_\gamma(\varphi)$.

\begin{lemma}\label{lem:phi2-linear-smooth}
Suppose that each \(f_p\) is linear, \(f_p(s)=c_p s\), with \(c_p\geq0\),
and let \(\Lambda_\gamma:\mathbb R^{\partial D}\to\mathbb R^{\partial D}\)
be the associated DtN map. For fixed \(1\leq k\leq n\), let $A=\mathfrak{R}_{>k}\Lambda_\gamma \mathfrak{E}_{k,1}$ and $
B=\mathfrak{R}_{>k}\Lambda_\gamma \mathfrak{E}_{k,2}$.
Then \(B\) is injective, \(\operatorname{range}(A)\subset
\operatorname{range}(B)\), and for every
\(\varphi_{k,1}\in\mathbb R^{J_{k,1}}\), the unique complementary datum
\(\varphi_{k,2}(\varphi_{k,1},\gamma)\in\mathbb R^{J_{k,2}}\) such that
$(\varphi_{k,1},\varphi_{k,2}(\varphi_{k,1},\gamma))\in V_k$
is given by
\begin{equation}\label{eqn:graph-corner-lin}
\varphi_{k,2}(\varphi_{k,1},\gamma) =-B^\dagger A\varphi_{k,1},
\quad \mbox{with }B^\dag=(B^\top B)^{-1}B^\top .
\end{equation}
Moreover, the set $\mathcal U_k := \{M\in\mathbb R^{\partial D\times\partial D}: B(M)=\mathfrak{R}_{>k}M\mathfrak{E}_{k,2}\ \text{is injective}\}$
is open, \(\Lambda_\gamma\in\mathcal U_k\), and with $A(M):=\mathfrak{R}_{>k}M \mathfrak{E}_{k,1}$, the map $(\varphi_{k,1},M)\mapsto -B(M)^\dag A(M) \varphi_{k,1}$ is \(C^\infty\) on \(\mathbb R^{J_{k,1}}\times\mathcal U_k\).
\end{lemma}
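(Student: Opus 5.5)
In the linear case \(f_p(s)=c_ps\) with \(c_p\ge0\), Assumption~\ref{assumption:fp-zero} holds and \(F_\gamma=\Lambda_\gamma\) is linear. Hence
\[
V_k=\{\varphi:\varphi|_{\partial D_{>k}}=0,\ \mathfrak R_{>k}\Lambda_\gamma\varphi=0\}.
\]
Any \(\varphi\) vanishing on \(\partial D_{>k}\) can be written as \(\mathfrak E_{k,1}\varphi_1+\mathfrak E_{k,2}\varphi_2\), since \(\partial D_{\le k}=J_{k,1}\cup J_{k,2}\) for \(k\le n\); the corner \((0,0)\) is not a boundary node. Therefore \((\varphi_1,\varphi_2)\in V_k\) if and only if \(A\varphi_1+B\varphi_2=0\). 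The whole proof amounts to translating Lemma~\ref{lem:Characterization lemma} into statements about \(A\) and \(B\).

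\textbf{Range inclusion and injectivity of \(B\).}
\begin{itemize}
\item For every \(\varphi_1\), Lemma~\ref{lem:Characterization lemma} gives some \(\varphi_2\) with \(A\varphi_1=-B\varphi_2\). Thus \(\operatorname{range}(A)\subset\operatorname{range}(B)\).
\item Suppose \(B\varphi_2=0\). Then \((0,\varphi_2)\in V_k\). The zero datum \((0,0)\) also lies in \(V_k\), because the solution is \(u\equiv0\) when \(f_p(0)=0\). The uniqueness part of Lemma~\ref{lem:Characterization lemma} with \(\varphi_{k,1}=0\) then forces \(\varphi_2=0\). So \(B\) is injective.
\item Injectivity makes \(B^\top B\) invertible, so \(B^\dagger=(B^\top B)^{-1}B^\top\) is well defined and \(B^\dagger B=I\).
\item Since \(-A\varphi_1=B\varphi_2\), applying \(B^\dagger\) gives \(\varphi_2=-B^\dagger A\varphi_1\). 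This is \eqref{eqn:graph-corner-lin}.
\end{itemize}

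\textbf{Openness and smoothness.}
\begin{itemize}
\item \(B(M)=\mathfrak R_{>k}M\mathfrak E_{k,2}\) is linear in \(M\). Injectivity of \(B(M)\) is equivalent to \(\det(B(M)^\top B(M))\neq0\), which is a polynomial condition in the entries of \(M\). So \(\mathcal U_k\) is open, and \(\Lambda_\gamma\in\mathcal U_k\) by the previous step.
\item On \(\mathcal U_k\), Cramer's rule shows \(M\mapsto(B(M)^\top B(M))^{-1}\) is a rational function with nonvanishing denominator, hence \(C^\infty\).
\item The map \((\varphi_1,M)\mapsto -(B^\top B)^{-1}B^\top A(M)\varphi_1\) is built from products of \(C^\infty\) factors with polynomial ones, so it is \(C^\infty\).
\end{itemize}

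\textbf{Main point of care.} The step most easily gotten wrong is the injectivity of \(B\). It depends on using the uniqueness half of Lemma~\ref{lem:Characterization lemma}, and so on the support localization of Lemma~\ref{lem:corner-data-support}. It also requires the facts that \(\partial D_{\le k}\) splits exactly into \(J_{k,1}\cup J_{k,2}\) and that the zero datum belongs to \(V_k\). Once these are checked, the rest of the argument is linear algebra.
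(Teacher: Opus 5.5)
Your proposal is correct and follows the paper's proof essentially step by step. You reduce membership in $V_k$ to $A\varphi_{k,1}+B\varphi_{k,2}=0$, obtain the range inclusion and the injectivity of $B$ from the existence and uniqueness halves of Lemma~\ref{lem:Characterization lemma} (the latter with $\varphi_{k,1}=0$), and get openness and smoothness from the invertibility of $B(M)^\top B(M)$. Your explicit checks that $\partial D_{\le k}=J_{k,1}\cup J_{k,2}$ and that the zero datum lies in $V_k$ fill in details the paper leaves implicit.
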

\begin{proof}
Any boundary datum $\varphi$ supported on \(\partial D_{\leq k}\) can be expressed as 
$\varphi=\mathfrak{E}_{k,1}\varphi_{k,1}+\mathfrak{E}_{k,2}\varphi_{k,2}$.
By the definition of \(V_k\), $\varphi$ belongs to \(V_k\) if and only if
$\mathfrak{R}_{>k}\Lambda_\gamma\varphi=0$:
\begin{align}\label{eqn:datum-decom}
A\varphi_{k,1}+B\varphi_{k,2}=0.
\end{align}
By Lemma~\ref{lem:Characterization lemma}, for any \(\varphi_{k,1}\in\mathbb R^{J_{k,1}}\), there exists a unique
\(\varphi_{k,2}\in\mathbb R^{J_{k,2}}\) such that \eqref{eqn:datum-decom} holds. By taking \(\varphi_{k,1}=0\), the uniqueness gives
$\ker(B)=\{0\}$, i.e., injectivity of \(B\). The existence statement, applied to an arbitrary
\(\varphi_{k,1}\), gives $\operatorname{range}(A)\subset \operatorname{range}(B).$ Since \(B\) is injective, \(B^\top B\) is invertible and the Moore-Penrose formula $B^\dagger=(B^\top B)^{-1}B^\top$ holds. Moreover, since \(A\varphi_{k,1}\in\operatorname{range}(B)\), the unique
solution of \eqref{eqn:datum-decom} is $\varphi_{k,2}=-B^\dagger A\varphi_{k,1}$.
This shows \eqref{eqn:graph-corner-lin}. Next we show the smoothness. The set \(\mathcal U_k\) is open since \(B(M)\) is injective if and only if \(B(M)^\top B(M)\) is invertible, which is an open condition. The injectivity of \(B\) shows \(\Lambda_\gamma\in\mathcal U_k\). For any \(M\in\mathcal U_k\), the identity $B(M)^\dagger=\bigl(B(M)^\top B(M)\bigr)^{-1}B(M)^\top$ holds.
The maps \(M\mapsto A(M)\) and \(M\mapsto B(M)\) are linear, and matrix inversion is \(C^\infty\) on the set of invertible matrices. Thus
$(\varphi_{k,1},M)\mapsto -B(M)^\dagger A(M)\varphi_{k,1}$ is \(C^\infty\) on \(\mathbb R^{J_{k,1}}\times\mathcal U_k\).
\end{proof}

Once the conductivities in a corner region have been recovered, the next layer-stripping step requires the potential values in the region. These interior potential values are not part of the measurement and must be reconstructed from the local Cauchy data (i.e., boundary voltages and currents). The next result shows that this can be done recursively using only the already known conductivities and local Cauchy data. For any node set \(X\subset\overline D\), let \(E(X)\) denote the set of edges whose endpoints lie entirely in \(X\).
\begin{lemma}\label{lem:PotentialDetermination}
Let \(2\leq \ell\leq n+1\). Suppose that \(f_p\) is known for all
\(p\in D_{\leq \ell-1}\), and that \(\gamma\) is known on all edges in
\(E(\overline{D}_{\leq \ell})\). Let
\(u\in \mathbb R^{\overline D}\) be a solution of
\eqref{eqn:Model_Nonlinear} with the Dirichlet data \(\varphi\) and Neumann data $\psi=F_\gamma(\varphi)$.
Then the restriction \(u|_{D_{\le \ell}}\) is uniquely determined by the local data
$(\gamma|_{E(\overline D_{\le \ell})}, \varphi|_{\partial D_{\le \ell-1}},\psi|_{\partial D_{\le \ell-1}})$. More precisely, there exists a map
$\mathcal D_\ell:\mathbb R_+^{E(\overline D_{\le \ell})} \times \mathbb R^{\partial D_{\le \ell-1}} \times \mathbb R^{\partial D_{\le \ell-1}} \to \mathbb R^{D_{\le \ell}}$
such that, whenever \((\gamma,\varphi,\psi)\) arise from a solution of problem \eqref{eqn:Model_Nonlinear}, there holds
\[
u|_{D_{\le \ell}} = \mathcal D_\ell\bigl( \gamma|_{E(\overline D_{\le \ell})}, \varphi|_{\partial D_{\le \ell-1}}, \psi|_{\partial D_{\le \ell-1}}\bigr).
\]
Moreover, if each \(f_p\), \(p\in D_{\leq \ell-1}\), is of class \(C^r\),
then the map \(\mathcal D_\ell\) is of class \(C^r\).
\end{lemma}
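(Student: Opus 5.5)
We propose a recursion that determines $u$ on $D_{\le \ell}$ diagonal by diagonal, propagating from the corner $(0,0)$ towards the diagonal $D_\ell$. It uses only Ohm's law at boundary nodes and the node equations at interior nodes whose neighbors lie in $\overline D_{\le \ell}$.

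\textbf{Step 1 (diagonal $D_1$).} We observe that $D_1=\varnothing$, since interior nodes satisfy $i,j\ge 1$. Hence the first nonempty interior diagonal is $D_2=\{(1,1)\}$.

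\textbf{Step 2 (Ohm's law from boundary nodes).} Fix $m$ with $2\le m\le \ell$. Every node $p=(i,j)\in D_m$ with $i=1$ or $j=1$ is the unique neighbor of a boundary node $b=(0,j)$ or $b=(i,0)$, and $b$ lies in $\partial D_{\le m-1}\subset\partial D_{\le \ell-1}$. The edge $\{b,p\}$ belongs to $E(\overline D_{\le \ell})$, because both endpoints have coordinate sum at most $m\le \ell$. The flux formula \eqref{eqn:flux} then gives
\[
u_p=\varphi_b-\psi_b/\gamma_{bp}.
\]
This covers all of $D_2$ and the two extreme nodes $(1,m-1)$ and $(m-1,1)$ of each $D_m$. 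In the square lattice the interior diagonal $D_m$ (for $m\le n+1$) consists of the nodes $(i,m-i)$, $1\le i\le m-1$, so only the middle nodes remain.

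\textbf{Step 3 (Kirchhoff's law for the middle nodes).} Suppose $u$ is already known on $D_{\le m-1}$ and on the extreme nodes of $D_m$. Let $p=(i,m-i)$ with $2\le i\le m-2$, and use the equation at the node $r=(i-1,m-i)\in D_{m-1}$, a node where $f_r$ is known. Its neighbors are $(i-2,m-i)$ and $(i-1,m-i-1)$, both in $\overline D_{m-2}$, together with $(i,m-i)=p$ and $(i-1,m-i+1)$, both in $D_m$.

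Solving the equation at $r$ for one unknown at a time is cumbersome. Instead we determine all of $D_m$ at once by a triangular sweep. Proceed along $D_m$ with increasing first coordinate: the node $(i,m-i)$ appears in the equation at $(i-1,m-i)$, whose only other unknown neighbor in $D_m$ is $(i-1,m-i+1)$. That node has already been determined, either by Step 2 (when $i=2$) or by the previous sweep step. Solving gives
\[
u_p=u_r+\frac{f_r(u_r)-\sum_{q\in N(r)\setminus\{p\}}\gamma_{rq}(u_q-u_r)}{\gamma_{rp}}.
\]
All conductances used here lie in $E(\overline D_{\le m})\subset E(\overline D_{\le \ell})$, and $f_r$ is known because $r\in D_{m-1}\subset D_{\le \ell-1}$. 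When $m-1\ge 2$ the node $r$ is interior, since its first coordinate is $i-1\ge 1$. Running $m=2,\dots,\ell$ determines $u|_{D_{\le \ell}}$, and this recursion defines the map $\mathcal D_\ell$. Because the recursion reproduces the values of any true solution, the identity $u|_{D_{\le\ell}}=\mathcal D_\ell(\cdots)$ holds.

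\textbf{Regularity and the main obstacle.} Each step uses arithmetic operations, division by positive conductances, and evaluations of $f_r$ with $r\in D_{\le \ell-1}$. If these $f_r$ are $C^r$, then $\mathcal D_\ell$ is a finite composition of $C^r$ maps on the open set $\mathbb R_+^{E(\overline D_{\le\ell})}\times\mathbb R^{\partial D_{\le\ell-1}}\times\mathbb R^{\partial D_{\le\ell-1}}$, and is therefore $C^r$.

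The main obstacle is the index bookkeeping. One must check that every neighbor and edge invoked actually lies within $\overline D_{\le \ell}$ and that only $f_r$ with $r\in D_{\le\ell-1}$ is used. One must also handle the case $\ell=n+1$, where $D_{n+1}$ still consists of the interior nodes $(i,n+1-i)$; there Step 2 uses the boundary nodes $(0,n)$ and $(n,0)$ in $\partial D_{\le n}$, which is fine. Beyond that, no solvability issue arises because each step is explicit.
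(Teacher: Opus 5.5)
Your proof is correct and is essentially the paper's argument. The paper also first reads off $u$ at the interior nodes adjacent to the lower and left boundaries from Ohm's law, and then solves the node equation at $(i,j)$ with $j\ge 2$, $i+j\le \ell-1$ for the right neighbour $u_{(i+1,j)}$; these are exactly the equations you use. The only difference is the traversal order: the paper sweeps column by column, you sweep diagonal by diagonal, and both are valid orderings of the same triangular recursion. For $m=2$ you should fix one of the two boundary neighbours of $(1,1)$ so that $\mathcal D_\ell$ is a single well-defined map.
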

\begin{proof}
We determine the potential $u$ in the corner \(D_{\leq \ell}\) layer by layer from the local Cauchy data $(\varphi|_{\partial D_{\leq \ell-1}},\psi|_{\partial D_{\leq \ell-1}})$ on \(\partial D_{\leq \ell-1}\) and from the
conductances $\gamma$ on \(E(\overline D_{\leq \ell})\). The procedure proceeds in two stages. First, the potential values at the nodes adjacent to the lower and left boundaries are obtained directly from the Neumann data. The Neumann data $\psi_{(i,0)}$ at the node $(i,0)$ on the lower boundary is given by
\[
\psi_{(i,0)} = \gamma_{(i,0)(i,1)} \bigl(u_{(i,0)}-u_{(i,1)}\bigr), \quad i=1,\ldots,\ell-1,
\]
from which we directly obtain
\begin{equation}\label{eqn:pot-lower}
u_{(i,1)} = \varphi_{(i,0)} - \frac{\psi_{(i,0)}}{\gamma_{(i,0)(i,1)}}, \quad i=1,\ldots,\ell-1.
\end{equation}
Similarly, on the left boundary, we have
\[
u_{(1,j)}=\varphi_{(0,j)}- \frac{\psi_{(0,j)}}{\gamma_{(0,j)(1,j)}}, \quad j=2,\ldots,\ell-1.
\]
Now we determine the remaining potential values in \(D_{\leq \ell}\). For each $i=1,\ldots,\ell-2$,
the potential value \(u_{(i+1,1)}\) has already been determined, cf. \eqref{eqn:pot-lower}.
We determine the remaining potential values columnwise. Then for $j=2,\ldots,\ell-1-i$ (which 
traverses the remaining interior nodes in $D_{\leq \ell}$), assume that all values appearing below, except possibly
\(u_{(i+1,j)}\), have already been determined. At the node \((i,j)\),
which belongs to \(D_{\leq \ell-1}\), equation \eqref{eqn:Model_Nonlinear} reads
\[
\sum_{q\in N(i,j)} \gamma_{(i,j)q} \bigl(u_q-u_{(i,j)}\bigr) = f_{(i,j)}(u_{(i,j)}).
\]
Set
\[
s_{(i,j)} := \sum_{\substack{q\in N(i,j)\\ q\neq (i+1,j)}}\gamma_{(i,j)q} \bigl(u_q-u_{(i,j)}\bigr).
\]
The quantity \(s_{(i,j)}\) is known from either the given boundary data or the previous step. 
Consequently,
\[
u_{(i+1,j)} = u_{(i,j)} + \frac{f_{(i,j)}(u_{(i,j)})-s_{(i,j)}}{\gamma_{(i,j)(i+1,j)}}.
\]
The edge \(\{(i,j),(i+1,j)\}\) lies in
\(E(\overline D_{\leq \ell})\). Hence this formula uniquely determines
\(u_{(i+1,j)}\). The procedure determines $u_p$ for all \(p\in D_{\leq \ell}\). 
For every compatible triple \((\gamma,\varphi,\psi)\), this gives a well-defined map
\[
\mathcal D_\ell\bigl(\gamma|_{E(\overline D_{\le \ell})},\varphi|_{\partial D_{\le \ell-1}},
\psi|_{\partial D_{\le \ell-1}}\bigr) =u|_{D_{\leq \ell}}.
\]
Finally, the construction consists of finitely many additions, multiplications, divisions by positive conductivities, and evaluations of
\(f_p\), \(p\in D_{\leq \ell-1}\). Thus, if
\(f_p\) are \(C^r\), then the map \(\mathcal D_\ell\) is \(C^r\).
\end{proof}

The reconstruction formulas for the new conductivities divide the currents across the next layer by potential differences on the active
diagonal. The next lemma shows that a suitable corner excitation produces a potential that is nonzero at every node of the corresponding diagonal.

\begin{lemma}\label{Lemma:NoneZero}
Let Assumption~\ref{assumption:fp-zero} hold. Let \(1\leq k\leq n\). If \(\varphi \in V_k\) satisfies
$\varphi_{(k,0)}\neq 0$, then the solution \(u=u_{\gamma,\varphi}\) of problem \eqref{eqn:Model_Nonlinear} is nonzero on $\overline D_k$.
\end{lemma}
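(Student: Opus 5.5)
The idea is to combine the support localization of Lemma~\ref{lem:corner-data-support} with the equations on the diagonal $D_{k+1}$. This gives a two-term relation between consecutive nodes of $\overline D_k$, which propagates the nonvanishing of $\varphi_{(k,0)}$ along the whole diagonal.

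First, by Lemma~\ref{lem:corner-data-support}, $u_p=0$ for all $p\in D_{>k}$. Since $\varphi\in V_k$ also gives $u=\varphi=0$ on $\partial D_{>k}$, we get $u=0$ on all of $\overline D_{>k}$.

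Next, list the nodes of $\overline D_k$ as $(k,0),(k-1,1),\dots,(1,k-1),(0,k)$; the first and last are boundary nodes and the others are interior. For $i=1,\dots,k$, let $p=(i,k+1-i)$. Both coordinates lie between $1$ and $n$ because $1\le i\le k\le n$, so $p\in D_{k+1}$ is an interior node. Its four neighbors are
\begin{itemize}
\item $(i+1,k+1-i)$ and $(i,k+2-i)$, which lie in $\overline D_{k+2}$, so $u$ vanishes there;
\item $(i-1,k+1-i)$ and $(i,k-i)$, which are consecutive nodes of $\overline D_k$.
\end{itemize}
Since $u_p=0$ and $f_p(0)=0$ by Assumption~\ref{assumption:fp-zero}, the equation $(L_\gamma u)_p=f_p(u_p)$ at $p$ reduces to
\[
\gamma_{p,(i-1,k+1-i)}\,u_{(i-1,k+1-i)}+\gamma_{p,(i,k-i)}\,u_{(i,k-i)}=0 .
\]
Both conductances are positive, so
\[
u_{(i-1,k+1-i)}=-\frac{\gamma_{p,(i,k-i)}}{\gamma_{p,(i-1,k+1-i)}}\,u_{(i,k-i)},
\]
which is a nonzero multiple of $u_{(i,k-i)}$.

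Finally, start from $u_{(k,0)}=\varphi_{(k,0)}\neq0$ and apply this relation for $i=k,k-1,\dots,1$. Each step moves one node along the diagonal, from $(i,k-i)$ to $(i-1,k+1-i)$, so induction shows that every node of $\overline D_k$ carries a nonzero value, with signs alternating along the diagonal. This covers the interior nodes of $D_k$ and the boundary node $(0,k)$.

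There is no real obstacle here. The only point needing care is the index check that each $p=(i,k+1-i)$ is a genuine interior node, which uses $k\le n$, and that its two upper neighbors lie in $\overline D_{>k}$, where $u$ vanishes by the localization lemma.
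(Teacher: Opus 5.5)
Your proof is correct and matches the paper's argument: Lemma~\ref{lem:corner-data-support} together with the zero boundary data gives $u=0$ on $\overline D_{>k}$, and the equation at each node of $D_{k+1}$ then yields the two-term recursion along $\overline D_k$, which propagates $\varphi_{(k,0)}\neq 0$ to every node. Your nodes $p=(i,k+1-i)$, $i=k,\dots,1$, are the paper's $p_i=(k-i,i+1)$, $i=0,\dots,k-1$, indexed differently.
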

\begin{proof}
By the definition of \(V_k\), for \(\varphi\in V_k\), $u_p=\varphi_p=0$  for every $ p\in \partial D_{>k}$, and by Lemma~\ref{lem:corner-data-support}, $u_p=0$ for every $p\in D_{>k}$. Hence $u_p=0$  for every $p\in \overline D_{>k}$.  We prove that
\( u_{(k-i,i)}\neq 0, \, i=0,\ldots,k.\)
By assumption, $u_{(k,0)}=\varphi_{(k,0)}\neq 0$. For \(i=0,\ldots,k-1\), let
$p_i:=(k-i,i+1)$. Then \(p_i\in D_{k+1}\subset D_{>k}\), we have \(u_{p_i}=0\).
The four neighbours of \(p_i\) are
\[
(k-i,i),\quad (k-i-1,i+1),\quad (k-i+1,i+1), \quad (k-i,i+2).
\]
The last two lie in \(\overline D_{>k}\), and have zero potentials. Using  equation \eqref{eqn:Model_Nonlinear} at $p_i$ 
and noting the condition \(u_{p_i}=0\) and the assumption \(f_{p_i}(0)=0\), we obtain
\[
\gamma_{p_i,(k-i,i)}u_{(k-i,i)} + \gamma_{p_i,(k-i-1,i+1)}u_{(k-i-1,i+1)}=0
\]
namely 
\[
u_{(k-i-1,i+1)} = - \frac{\gamma_{p_i,(k-i,i)}}{\gamma_{p_i,(k-i-1,i+1)}}u_{(k-i,i)}.
\]
By unrolling the recursion, we obtain
\[
u_{(k-i,i)} = (-1)^i\prod_{r=0}^{i-1} \frac{ \gamma_{(k-r,r+1),(k-r,r)}}{\gamma_{(k-r,r+1),(k-r-1,r+1)}}u_{(k,0)},\quad i=1,\ldots,k.
\]
Since \(u_{(k,0)}\neq 0\) and $\gamma\in \mathbb{R}_+^{E}$, each $u_{(k-i,i)}$ is nonzero and \(u\) is nonzero on \(\overline D_k\).
\end{proof}

We now prove a uniqueness result for corner excitations using the layer-by-layer reconstruction. For each diagonal \(k\), we
prescribe a unit voltage at the lower boundary node \((k,0)\), denoted by $\mathbf e_k$, and use
Lemma~\ref{lem:Characterization lemma} to determine the unique
complementary voltage $\varphi_{k,2}$ that produces a \(k\)-corner
datum. Lemma~\ref{lem:corner-data-support} localizes the resulting potential, Lemma~\ref{lem:PotentialDetermination} reconstructs its
potential in the region whose conductivities are already known, and
Lemma~\ref{Lemma:NoneZero} ensures that the divisions used to recover
the next conductivities are legitimate.
\begin{theorem}\label{thm:gamma-recovery}
Let Assumption~\ref{assumption:fp-zero} hold. Let $\varphi^{(k)}=\mathfrak{E}_{k,1}\mathbf e_k + \mathfrak{E}_{k,2}\varphi_{k,2}(\mathbf e_k,\gamma)$, $k=1,\ldots, n$. Then the conductivity \(\gamma\) on
$E(\overline D_{\le n+1})$ is uniquely determined by  $M_- :=\{(\varphi^{(k)},\psi^{(k)}=F_\gamma(\varphi^{(k)}))\}_{k=1}^n$.
Moreover, if each \(f_p\) is of class \(C^r\), then for each compatible family \(M_-\), the recovery is implemented locally by a \(C^r\) map
$\mathcal C:\mathcal U\subset\left(\mathbb R^{\partial D}\times\mathbb R^{\partial D}\right)^n\to\mathbb R^{E(\overline D_{\le n+1})},$
defined on a neighborhood \(\mathcal U\) of \(M_-\), such that $\mathcal C(M_-)=\gamma|_{E(\overline D_{\le n+1})}.$
\end{theorem}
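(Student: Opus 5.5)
Our plan is to argue by induction on $k=1,\dots,n$, showing that after processing $\varphi^{(k)}$ the conductances on $E(\overline D_{\le k+1})$ are known. The new edges at step $k$ are those joining a node of $\overline D_k$ to a node of $D_{k+1}$. Each $p_i=(k-i,i+1)\in D_{k+1}$, $i=0,\dots,k-1$, has two such edges: the ``down'' edge to $(k-i,i)$ and the ``left'' edge to $(k-i-1,i+1)$. (For $k=1$ there are no previously known edges, and $D_{\le 1}$ is empty, so we may read $\mathcal D_1$ as trivial.)

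Fix $k$ and set $u=u_{\gamma,\varphi^{(k)}}$ and $\psi=\psi^{(k)}$. By construction $\varphi^{(k)}\in V_k$. Lemma~\ref{lem:corner-data-support} then gives $u=0$ on $\overline D_{>k}$, and Lemma~\ref{Lemma:NoneZero} (note $\varphi^{(k)}_{(k,0)}=1$) gives $u\neq0$ at every node of $\overline D_k$. Apply Lemma~\ref{lem:PotentialDetermination} with $\ell=k$, using the inductively known $\gamma|_{E(\overline D_{\le k})}$ and the data on $\partial D_{\le k-1}$. This recovers $u$ on $D_{\le k}$, hence on all of $\overline D_k$ and on $\overline D_{k-1}$. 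At the extreme nodes the data already suffice:
\[
\gamma_{(k,0)(k,1)}=\frac{\psi_{(k,0)}}{u_{(k,0)}},\qquad \gamma_{(0,k)(1,k)}=\frac{\psi_{(0,k)}}{u_{(0,k)}},
\]
because $u_{(k,1)}=u_{(1,k)}=0$ and the denominators are nonzero by Lemma~\ref{Lemma:NoneZero}.

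For interior nodes $q=(a,b)\in D_k$, we use the equation at $q$:
\[
\sum_{r\in N(q)}\gamma_{qr}(u_r-u_q)=f_q(u_q).
\]
Two neighbours, $(a-1,b)$ and $(a,b-1)$, lie in $\overline D_{k-1}$, and their conductances and potentials are known. The other two, $(a+1,b)$ and $(a,b+1)$, lie in $D_{k+1}$, have potential $0$, and carry unknown conductances. This yields one linear equation,
\[
\bigl(\gamma_{q,(a+1,b)}+\gamma_{q,(a,b+1)}\bigr)u_q=\text{known},
\]
so only the sum is recovered, which is not enough. To separate the two unknowns we also use the equations at the $p_i\in D_{k+1}$, where $u_{p_i}=0$ and $f_{p_i}(0)=0$:
\[
\gamma_{p_i,(k-i,i)}u_{(k-i,i)}+\gamma_{p_i,(k-i-1,i+1)}u_{(k-i-1,i+1)}=0 .
\]
Each such equation fixes the ratio of the down and left conductances at $p_i$, since both potentials are known and nonzero.

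We then solve the resulting triangular recursion. Start at $p_0=(k,1)$: its down conductance is $\gamma_{(k,0)(k,1)}$, already known, so the ratio equation gives its left conductance $\gamma_{(k,1),(k-1,1)}$. The node $(k-1,1)\in D_k$ touches exactly two new edges, the one to $p_0$ (now known) and the one to $p_1=(k-1,2)$, so its sum equation gives the down conductance of $p_1$. The ratio equation at $p_1$ then gives its left conductance, and so on along the diagonal. The last step is consistent with $\gamma_{(0,k)(1,k)}$ obtained from $\psi_{(0,k)}$. This sweep recovers all new edges and completes the induction. At $k=n$ the diagonal $D_{n+1}$ meets the top and right boundary edges, and the same equations cover them, so we obtain $\gamma$ on all of $E(\overline D_{\le n+1})$.

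Every step above is a finite composition of arithmetic operations, divisions by nonzero quantities, the maps $\mathcal D_\ell$ (which are $C^r$), and evaluations of $f_p$. Nonvanishing of a denominator is an open condition, so the same formulas define a $C^r$ map $\mathcal C$ on a neighborhood $\mathcal U$ of $M_-$, with $\mathcal C(M_-)=\gamma|_{E(\overline D_{\le n+1})}$.

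I expect the main obstacle to be the bookkeeping of the diagonal recursion. We must check that the node equations on $D_k$ and $D_{k+1}$ together form a genuinely triangular, uniquely solvable system. We must also check that every division is by a potential on $\overline D_k$, so that Lemma~\ref{Lemma:NoneZero} applies.
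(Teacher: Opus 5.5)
Your proposal is correct and follows the paper's proof. Both arguments run an induction over the diagonals, localize with Lemma~\ref{lem:corner-data-support}, recover the potential with Lemma~\ref{lem:PotentialDetermination}, sweep along the diagonal starting from the Neumann datum at $(k,0)$ with divisions justified by Lemma~\ref{Lemma:NoneZero}, and use the same open-condition argument to get the local $C^r$ map.

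The differences are minor. You write the sweep directly in conductances (a sum equation at each node of $D_k$ and a ratio equation at each node of $D_{k+1}$), whereas the paper first solves for the edge currents $\xi$ and divides by the potentials at the end. Also, the top and right boundary edges touching $D_{n+1}$ are not in $E(\overline D_{\le n+1})$, so they are neither needed nor recovered. Those boundary neighbours only contribute zero potential, which leaves the ratio equations at $k=n$ intact.
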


\begin{figure}[hbt!]
    \centering
\begin{tabular}{cc}
 \begin{tikzpicture}[scale=1.3]
    \draw[gray!20, very thin] (0,0) grid (4,4);
    \node[red] at (1,0) {$\bullet$} ;
     \node[red] at (0,1) {$\bullet$} ;
     \node[red] at (1,1) {$\bullet$} ;

    \node[below] at (0,1) {$(0,1)$};

    \node[above right] at (1,1) {$u=0$};

    \node[below] at (1,0) {$(1,0)$};

    \draw[ultra thick, blue, -{Stealth[length=2mm, width=4mm,red]}] (1,0) -- (1,0.5);\draw[ultra thick, blue] (1,0.5) -- (1,1);\node[right] at (1,0.5) {${\gamma}_1$};

    \draw[ultra thick, blue, -{Stealth[length=2mm, width=4mm,red]}] (1,1) -- (0.5,1); \draw[ultra thick, blue] (0.5,1) -- (0,1);\node[above] at (0.5,1) {${\gamma}_2$};
    \draw[ultra thick] (1,1)--(1,4);
      \draw[ultra thick] (2,0)--(2,4);
    \draw[ultra thick] (3,0)--(3,4);
 \draw[ultra thick] (1,1)--(4,1);
 \draw[ultra thick] (0,2)--(4,2);
 \draw[ultra thick] (0,3)--(4,3);

    \draw[ultra thick, green!50!black, ->] (1-0.6,1-0.8) -- (1-0.01,1); 

\end{tikzpicture}    & \begin{tikzpicture}[scale=1.3]

    \draw[gray!20, very thin] (0,0) grid (4,4);
    \node[red] at (1,0) {$\bullet$} ;
     \node[red] at (0,1) {$\bullet$} ;
     \node[red] at (1,1) {$\bullet$} ;
     \node[red] at (2,0) {$\bullet$} ;
     \node[red] at (0,2) {$\bullet$} ;
\draw[ultra thick] (1,1)--(1,4);
      \draw[ultra thick] (2,0)--(2,4);
    \draw[ultra thick] (3,0)--(3,4);
 \draw[ultra thick] (1,1)--(4,1);
 \draw[ultra thick] (0,2)--(4,2);
 \draw[ultra thick] (0,3)--(4,3);

    \node[below] at (0,1) {$(0,1)$};
 \node[below] at (0,2) {$(0,2)$};
    \node[above right] at (2,1) {$u=0$};
       \node[above right] at (1,2) {$u=0$};

    \node[below] at (1,0) {(1,0)};
    \node[below] at (2,0) {$(2,0)$};
    \draw[ultra thick, red, -{Stealth[length=2mm, width=4mm,red]}] (1,0) -- (1,0.5);\draw[ultra thick, red] (1,0.5) -- (1,1);\node[right] at (1,0.5) {${\gamma}_1$};
    \draw[ultra thick, red, -{Stealth[length=2mm, width=4mm,red]}] (2,0) -- (2,0.5);\draw[ultra thick, red] (2,0.5) -- (2,1);\node[right] at (2,0.5) {${\gamma}_3$};

    \draw[ultra thick, red, -{Stealth[length=2mm, width=4mm,red]}] (1,1) -- (0.5,1); \draw[ultra thick, red] (0.5,1) -- (0,1);\node[above] at (0.5,1) {${\gamma}_2$};
\draw[ultra thick, red, -{Stealth[length=2mm, width=4mm,red]}] (2,1) -- (1.5,1); \draw[ultra thick, red] (1.5,1) -- (1,1);\node[above] at (1.5,1) {${\gamma}_4$};
  \draw[ultra thick, red, -{Stealth[length=2mm, width=4mm,red]}] (1,2) -- (0.5,2); \draw[ultra thick, red] (0.5,2) -- (0,2);\node[above] at (0.5,2) {${\gamma}_6$};
  \draw[ultra thick, red, -{Stealth[length=2mm, width=4mm,red]}] (1,1) -- (1,1.5);\draw[ultra thick, red] (1,1.5) -- (1,2);\node[right] at (1,1.5) {${\gamma}_5$};

    \draw[ultra thick, green!50!black, ->] (1-0.6,1-0.8) -- (1-0.01,1); 

\end{tikzpicture} \\
  (a) first pair   & (b) second pair
\end{tabular}
  \caption{The first two pairs of DtN data in \(M_-\)}
    \label{fig:illustration_1}
\end{figure}
\begin{proof}
For each \(k=1,\ldots,n\), let $u^{(k)}:=u_{\gamma,\varphi^{(k)}}$ be the solution of problem \eqref{eqn:Model_Nonlinear} with the Dirichlet boundary value
\(\varphi^{(k)}\). Since $\varphi^{(k)}\in V_k$, Lemma~\ref{lem:corner-data-support} gives $u^{(k)}=0$ on $\overline D_{>k}$. We prove the recovery by induction on the diagonal layers.

\medskip
\noindent
\emph{Base step.} Consider the pair $(\varphi^{(1)},\psi^{(1)})\in M_-$. Since \(u^{(1)}=0\) on \(\overline D_{>1}\), we have $u^{(1)}_{(1,1)}=0$. 
The choice $u^{(1)}_{(1,0)} =\varphi^{(1)}_{(1,0)}=1$ and Lemma~\ref{Lemma:NoneZero} imply $u^{(1)}_{(1,0)}\neq0$ and $u^{(1)}_{(0,1)}\neq0$.
Now by definition, we have 
\begin{align*}
\psi^{(1)}_{(1,0)}&=\gamma_{(1,0)(1,1)}(u^{(1)}_{(1,0)}-u^{(1)}_{(1,1)})=\gamma_{(1,0)(1,1)}u^{(1)}_{(1,0)},\\
\psi^{(1)}_{(0,1)}&=\gamma_{(0,1)(1,1)}(u^{(1)}_{(0,1)}-u^{(1)}_{(1,1)})=\gamma_{(0,1)(1,1)}u^{(1)}_{(0,1)}.
\end{align*}
Consequently, we obtain
\[
\gamma_{(1,0)(1,1)}=\frac{\psi^{(1)}_{(1,0)}}{u^{(1)}_{(1,0)}}\quad\mbox{and}\quad \gamma_{(0,1)(1,1)}=\frac{\psi^{(1)}_{(0,1)}}{u^{(1)}_{(0,1)}}.
\]
Thus \(\gamma\) is recovered on $E(\overline D_{\le 2}).$ See Fig. \ref{fig:illustration_1}(a) for an illustration. This proves the base case.

\medskip\noindent\emph{Induction step.}
Assume that, for some \(2\leq \ell\leq n\), the conductivity has already been recovered on $
E(\overline D_{\le \ell}).$ Next we show how to recover \(\gamma\) on
$E(\overline D_{\le \ell+1})$ using the pair $(\varphi^{(\ell)},\psi^{(\ell)})\in M_-$.

\medskip
\noindent
\underline{Step 1: Recovery of the potential \(u^{(\ell)}\).} Since \(\gamma\) is known on \(E(\overline D_{\le \ell})\), Lemma
\ref{lem:PotentialDetermination} applied to the local Cauchy data $(\varphi^{(\ell)}|_{\partial D_{\le \ell-1}},\psi^{(\ell)}|_{\partial D_{\le \ell-1}})$ uniquely determines $u^{(\ell)}|_{D_{\le \ell}}$. Moreover, since \(\varphi^{(\ell)}\in V_\ell\), we also have $u^{(\ell)}=0$  on $\overline D_{>\ell}$.
Therefore \(u^{(\ell)}\) is known on all nodes needed in the next step.

\medskip
\noindent\underline{Step 2: Recovery of the currents across the next layer.} Given $\gamma$ and $u^{(\ell)}$ on $\overline{D}_{\leq \ell}$, we first recover the currents on each edge in $E(\overline{D}_\ell\cup\overline{D}_{\ell+1})$.
For an oriented edge \(pq\), define the current $\eta_{pq}:=\gamma_{pq}\bigl(u^{(\ell)}_p-u^{(\ell)}_q\bigr)$. Then the equation at \(p\) equivalently reads
\[
\sum_{q\in N(p)}\eta_{pq} =-f_p(u^{(\ell)}_p).
\]
Let \(q=(i,j)\in D_{\ell+1}\). Since $u^{(\ell)}_q=0$ and \(f_q(0)=0\), the equation at \(q\) implies that the two currents from
the diagonal $\overline D_\ell$ into \(q\) cancel out. Let $\xi_{(i,j)}:= \eta_{(i-1,j),(i,j)}$. Then
\[
\eta_{(i,j-1),(i,j)} = -\xi_{(i,j)}.
\]
At the two ends of the layer, these currents are determined by
the measured Neumann data: $\xi_{(1,\ell)}= \psi^{(\ell)}_{(0,\ell)} \text{ and }
\xi_{(\ell,1)} = -\psi^{(\ell)}_{(\ell,0)}.$
Now let \(p=(i,j)\in D_\ell\). The two currents from \(p\) to the layer $\overline{D}_{\ell-1}$ 
are already known, since \(\gamma\) and \(u^{(\ell)}\) are known on \(\overline D_{\le \ell}\).
The remaining two currents are those from \(p\) to \((i+1,j)\) and \((i,j+1)\). In terms of the
variables \(\xi\), the equation at \(p\) becomes
\begin{align}\label{eqn:eq-xi}
-f_p(u^{(\ell)}_p) =\xi_{(i+1,j)} - \xi_{(i,j+1)} + \eta_{p,(i-1,j)} + \eta_{p,(i,j-1)}.
\end{align}
The last two terms are known. Thus \eqref{eqn:eq-xi} determines one new
value of \(\xi\) from the preceding one. Starting from the known value
$\xi_{(\ell,1)}=-\psi^{(\ell)}_{(\ell,0)}$, we successively recover all currents
$\xi_{(i,j)} \text{ with }i+j=\ell+1.$ The final value agrees with $\xi_{(1,\ell)}=\psi^{(\ell)}_{(0,\ell)}$ since the data are compatible.

\medskip
\noindent
\underline{Step 3: Recovery of the conductivities in $E(\overline D_{\le \ell+1}) \setminus
E(\overline D_{\le \ell})$.} The edges $E(\overline D_{\le \ell+1}) \setminus
E(\overline D_{\le \ell})$ connect \(\overline D_\ell\) to \(\overline{D}_{\ell+1}\). Let $q=(i,j)\in D_{\ell+1}$.
Since $u^{(\ell)}_q=0$, we recover the conductance on the left edge by
\[
\gamma_{(i-1,j),(i,j)}
=
\frac{\xi_{(i,j)}}{u^{(\ell)}_{(i-1,j)}}.
\]
Similarly, using $\eta_{(i,j-1),(i,j)}
=
-\xi_{(i,j)}$, we recover the conductance on the lower edge by
\[
\gamma_{(i,j-1),(i,j)}=-\frac{\xi_{(i,j)}}{u^{(\ell)}_{(i,j-1)}}.
\]
The denominators are nonzero by Lemma~\ref{Lemma:NoneZero}, since
$(i-1,j),\text{ and } (i,j-1)$ belong to \(\overline D_\ell\). Hence all conductivities on
$E(\overline D_{\le \ell+1})\setminus E(\overline D_{\le \ell})$ are recovered.
This completes the induction step. Therefore the conductivity \(\gamma\) is uniquely determined on
$E(\overline D_{\le n+1})$. Finally, assume that each \(f_p\) is \(C^r\). The preceding reconstruction
uses only finitely many additions, multiplications, divisions by nonzero
quantities, and evaluations of the \(C^r\) functions \(f_p\). Hence the
reconstruction is \(C^r\) on any open set of data for which the
denominators in the formulas remain nonzero. This gives the asserted local \(C^r\) recovery map.
\end{proof}

\begin{corollary}
Let $M_+$ be the family of upper-right corner data obtained by applying the preceding construction after the reflection $(i,j)\mapsto(n+1-i,n+1-j)$. Then its union with the lower-left corner data family $M_-$ uniquely determines the conductivity $\gamma$ on $E$.
\end{corollary}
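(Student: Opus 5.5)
The plan is to reduce the corollary to Theorem~\ref{thm:gamma-recovery} by symmetry, and then to check that the two recovered regions together cover every edge of $E$.

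\emph{Step 1 (the reflected problem).} Let $\sigma(i,j)=(n+1-i,n+1-j)$. This is a graph automorphism of the square lattice. It maps $D$ to $D$ and $\partial D$ to $\partial D$, and it preserves the edge relation \eqref{def:E}. Define the pulled-back data $\tilde\gamma_{pq}=\gamma_{\sigma(p)\sigma(q)}$ and $\tilde f_p=f_{\sigma(p)}$. The reflected nonlinearities still satisfy Assumption~\ref{assumption:fp-zero}. A potential $u$ solves \eqref{eqn:Model_Nonlinear} for $(\gamma,f,\varphi)$ if and only if $u\circ\sigma$ solves it for $(\tilde\gamma,\tilde f,\varphi\circ\sigma)$. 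This holds because the Laplacian and the flux formula \eqref{eqn:flux} involve only neighbor relations, and $\sigma$ preserves those. Together with the uniqueness in Theorem~\ref{thm:well-posed}, this gives
\[
F_{\tilde\gamma}(\varphi\circ\sigma)=F_\gamma(\varphi)\circ\sigma .
\]
By construction, $M_+$ is the image under $\sigma$ of the family $\tilde M_-$ of lower-left corner data for $\tilde\gamma$ built in Theorem~\ref{thm:gamma-recovery}. Applying Theorem~\ref{thm:gamma-recovery} to $\tilde M_-$ therefore determines $\tilde\gamma$ on $E(\overline D_{\le n+1})$. Equivalently, $M_+$ determines $\gamma$ on the edges $\sigma(E(\overline D_{\le n+1}))$, which are the edges whose endpoints all satisfy $i+j\ge n+1$.

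\emph{Step 2 (covering).} Take any edge $\{p,q\}\in E$. Its endpoints satisfy $|(i+j)-(i'+j')|=1$, so their coordinate sums are $s$ and $s+1$ for some integer $s$.
\begin{itemize}
\item If $s+1\le n+1$, both endpoints lie in $\overline D_{\le n+1}$. The edge is then in $E(\overline D_{\le n+1})$ and is determined by $M_-$.
\item Otherwise $s\ge n+1$. Both endpoints then have coordinate sum at least $n+1$, so the edge is determined by $M_+$.
\end{itemize}
These two cases are exhaustive, so every edge is covered. The edges between the diagonals $n+1$ and $n+2$ fall only in the second case; no edge is left out.

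\emph{Main check.} The only point needing care is Step 1: verifying that the reflection intertwines the DtN maps and sends corner data to corner data. The key fact is that $\sigma$ maps $\partial D_{>k}$, as defined for $\tilde\gamma$, onto the set $\{i+j<2n+2-k\}$ of the original boundary. This is a direct but routine index check.

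Two minor remarks complete the argument. The nonlinearity $f$ is assumed known, so $\tilde f$ is known as well. Where the two families overlap, both recover the true $\gamma$, so no consistency issue arises.
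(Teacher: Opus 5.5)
Your proposal is correct and follows the paper's route: apply Theorem~\ref{thm:gamma-recovery} again from the upper-right corner and take the union with the lower-left recovery. You also supply two details the paper leaves implicit, and both are right. The first is the intertwining identity $F_{\tilde\gamma}(\varphi\circ\sigma)=F_\gamma(\varphi)\circ\sigma$ under the reflection. The second is the explicit check that the two edge sets cover $E$; in fact they are disjoint, so your remark about overlaps is vacuous but harmless.
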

\begin{proof}
The conductivity on the complementary upper-right region can be recovered
by the same argument, starting from the upper-right corner. This and Theorem \ref{thm:gamma-recovery} directly imply the assertion.
\end{proof}
\begin{remark}
The measurement family \(M_-\) is determined by the full nonlinear DtN map \(F_\gamma\) in the following sense. For each \(k=1,\ldots,n\), the datum \(\varphi^{(k)}\) is the unique element of \(V_k\) whose restriction to \(J_{k,1}\) is \(\mathbf e_k\).
Once \(\varphi^{(k)}\) is determined, the corresponding measurement is $\psi^{(k)}=F_\gamma(\varphi^{(k)})$.
\end{remark}

\section{Recovery from the linearized problem}
\label{sec:linearized-recovery}

The corner excitations used in Section \ref{sec:corner-data} are central to the layer-stripping argument, but they are strongly tied to the unknown conductivity $\gamma$. Now we investigate a formulation in terms of data that can be specified independently of \(\gamma\). The key idea is to linearize the map $F_\gamma(\varphi)$ at an arbitrary $\varphi\in\mathbb{R}^{\partial D}$. The derivative is the DtN map $\Lambda_\gamma(\varphi)$ of a linear discrete Schr\"{o}dinger-type equation whose reaction coefficient is $\zeta_p=f_p'(u_p)$. Note that \(\zeta_p\) is not known a priori, and coupled to the unknown background \(u\). 
In this section, we prove that the linearized DtN map $\Lambda_{\gamma}(\varphi)$ provides the linearized corner excitations and their boundary currents. Combined with one nonlinear Cauchy data pair, which is used to recover the background potential $u$ (and the reaction coefficient $\zeta_p$), these data determine both $u$ and $\gamma$.

The analysis requires the following assumption.

\begin{assumption}\label{Assumption: fp in C1}
For each \(p\in D\), \(f_p\in C^r\), \(r\geq 1\), and $f_p'(t)\geq 0$ for all \(t\in\mathbb R\).
\end{assumption}

For a fixed background  $u=u_{\gamma,\varphi} \in \mathbb{R}^{\overline{D}}$, the derivative $w=\left. \frac{\mathrm d}{\mathrm dt} u_{\gamma,\varphi +t h}\right|_{t=0} \in \mathbb{R}^{\overline{D}}$ associated with a boundary perturbation \(h\in\mathbb R^{\partial D}\) satisfies
\begin{equation}\label{eq:linearized}
\left\{\begin{aligned}
(L_\gamma w)_p&= f_p'(u_p)w_p, && p\in D,\\
 w_p&=h_p, && p\in\partial D.
\end{aligned}\right.
\end{equation}
The linearized DtN map at the boundary value \(\varphi\) is denoted by $ \Lambda_\gamma(\varphi) = D_\varphi F_\gamma(\varphi).$ It maps the boundary perturbation \(h\) to the Neumann data of the solution \(w\) of problem \eqref{eq:linearized}.

The next result proves the smooth dependence of $u_{\gamma,\varphi}$ and $F_\gamma(\varphi)$ on $\gamma$ and $\varphi$, which justifies the linearization procedure. It also identifies the derivative $D_\varphi F_\gamma(\varphi)$ of $F_\gamma(\varphi)$ with respect to $\varphi$ as the DtN map $\Lambda_{\gamma}(\varphi)$ of the linearized equation \eqref{eq:linearized}.
\begin{proposition}\label{prop:linearization}
Let \(f=(f_p)_{p\in D}\) satisfy Assumption~\ref{Assumption: fp in C1}. Fix $(\gamma_0,\varphi_0)\in \mathbb R_+^E\times \mathbb R^{\partial D}$. Then there exist neighborhoods $U_\gamma\subset \mathbb R_+^E$ and $ U_\varphi\subset \mathbb R^{\partial D}$  of \(\gamma_0\) and \(\varphi_0\), respectively, such that the following three statements hold.
\begin{enumerate}[label={\rm(\roman*)}]
\item The maps $(\gamma,\varphi)\mapsto u_{\gamma,\varphi}\in \mathbb{R}^{\overline{D}}$
and $(\gamma,\varphi)\mapsto F_\gamma(\varphi)\in\mathbb R^{\partial D}$ are of class \(C^r\) on \(U_\gamma\times U_\varphi\).

\item For every \((\gamma,\varphi)\in U_\gamma\times U_\varphi\) and  \(h\in\mathbb R^{\partial D}\), the derivative
$ w=\frac{\mathrm d}{\mathrm dt}u_{\gamma,\varphi+t h}|_{t=0}$ exists and satisfies the linearized problem \eqref{eq:linearized}.

\item For each fixed \(\gamma\in U_\gamma\), the map $\varphi\mapsto F_\gamma(\varphi)$ is Fr\'{e}chet differentiable on \(U_\varphi\), and
$D_\varphi F_\gamma(\varphi)=\Lambda_\gamma(\varphi)$,
where \(\Lambda_\gamma(\varphi)\) is the DtN map associated with problem \eqref{eq:linearized}. Moreover, the map $(\gamma,\varphi)\mapsto \Lambda_\gamma(\varphi)$ is of class \(C^{r-1}\) on \(U_\gamma\times U_\varphi\).
\end{enumerate}
\end{proposition}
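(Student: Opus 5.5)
The plan is to apply the implicit function theorem to the interior equations, viewed as a map of $(\gamma,\varphi,u_D)$. Define
\[
\Phi:\mathbb R_+^E\times\mathbb R^{\partial D}\times\mathbb R^{D}\to\mathbb R^D,\qquad \Phi(\gamma,\varphi,u_D)_p:=(L_\gamma \widetilde u)_p-f_p(u_p),\quad p\in D,
\]
where $\widetilde u\in\mathbb R^{\overline D}$ equals $u_D$ on $D$ and $\varphi$ on $\partial D$. The Laplacian part is polynomial (bilinear) in $(\gamma,\widetilde u)$, and each $f_p$ is $C^r$ by Assumption~\ref{Assumption: fp in C1}, so $\Phi$ is $C^r$. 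By Theorem~\ref{thm:well-posed} (Assumption~\ref{Assumption: fp in C1} implies Assumption~\ref{assumption:monotone-f}), for every $(\gamma,\varphi)$ there is a unique $u_D$ with $\Phi(\gamma,\varphi,u_D)=0$, namely $u_{\gamma,\varphi}|_D$.

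The key step is invertibility of the partial derivative in $u_D$. We have
\[
D_{u_D}\Phi=-M_D-\operatorname{diag}\bigl(f_p'(u_p)\bigr)_{p\in D},
\]
with $M_D$ the Dirichlet graph Laplacian \eqref{eqn:graph-Lap}. In the proof of Lemma~\ref{lem:convexity-coercivity} we showed that $M_D$ is symmetric positive definite. Since $f_p'\ge0$, the matrix $M_D+\operatorname{diag}(f_p'(u_p))$ is also positive definite, hence invertible.

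The implicit function theorem at $(\gamma_0,\varphi_0,u_{\gamma_0,\varphi_0}|_D)$ then gives neighborhoods $U_\gamma,U_\varphi$ and a $C^r$ map $g$ with $\Phi(\gamma,\varphi,g(\gamma,\varphi))=0$. The theorem also guarantees a neighborhood of the base point in $u_D$ on which solutions are unique, which is where I expect the only subtlety to lie. Global uniqueness from Theorem~\ref{thm:well-posed} settles it: it identifies $g(\gamma,\varphi)=u_{\gamma,\varphi}|_D$ on all of $U_\gamma\times U_\varphi$, so no localization argument is needed. Hence $(\gamma,\varphi)\mapsto u_{\gamma,\varphi}$ is $C^r$, its boundary part being just $\varphi$. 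Moreover $F_\gamma(\varphi)_p=\gamma_{pq}(\varphi_p-u_q)$ for the unique neighbor $q$ of $p\in\partial D$, by \eqref{eqn:flux}, so $F$ is $C^r$ as well. This proves (i).

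For (ii), differentiate the identity $\Phi(\gamma,\varphi+th,u_{\gamma,\varphi+th}|_D)=0$ at $t=0$ by the chain rule. Since $L_\gamma$ is linear, we get $(L_\gamma w)_p-f_p'(u_p)w_p=0$ for $p\in D$, and $w=h$ on $\partial D$ because the boundary values of $u_{\gamma,\varphi+th}$ are $\varphi+th$. This is exactly \eqref{eq:linearized}, and that problem has a unique solution by the same positive definiteness.

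For (iii), $F_\gamma$ is $C^r$ with $r\ge1$, hence Fr\'echet differentiable. Differentiating \eqref{eqn:flux} gives $D_\varphi F_\gamma(\varphi)h=\bigl(\gamma_{pq}(w_p-w_q)\bigr)_{p\in\partial D}$, which is the Neumann datum of $w$, i.e. $\Lambda_\gamma(\varphi)h$. Finally, $\Lambda_\gamma(\varphi)=D_\varphi F_\gamma(\varphi)$ is a first derivative of the $C^r$ map $(\gamma,\varphi)\mapsto F_\gamma(\varphi)$, so it is $C^{r-1}$ in $(\gamma,\varphi)$. Equivalently, it can be written explicitly through $(M_D+\operatorname{diag} f_p'(u_p))^{-1}$, whose entries are $C^{r-1}$.
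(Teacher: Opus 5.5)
Your proposal is correct and follows essentially the same route as the paper. Both apply the implicit function theorem to the interior equations, using positive definiteness of $M_D+\operatorname{diag}(f_p'(u_p))$, and identify the local branch with $u_{\gamma,\varphi}$ via the global uniqueness in Theorem~\ref{thm:well-posed}. Both then obtain (ii) and (iii) by chain-rule differentiation of the interior equations and of the boundary flux formula, with $\Lambda_\gamma(\varphi)$ being $C^{r-1}$ as a first derivative of a $C^r$ map.
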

\begin{proof}
To separate the interior components of a candidate solution $u\in \mathbb{R}^{\overline{D}}$ from the boundary data $\varphi$, we denote by $\overline{u}$ the restriction of $u$ to $D$, i.e., $\overline{u}=u|_D$, and likewise, for any pair $(\overline{u},\varphi)$, we define $u\in \mathbb{R}^{\overline{D}}$ by $u|_D=\overline{u}$ and $u|_{\partial D}:=\varphi$. We prove the three assertions separately.

\medskip

\noindent\textbf{(i)}
Fix $(\gamma_0,\varphi_0)\in \mathbb R_+^E\times\mathbb R^{\partial D}$ and define a map $G:
\mathbb R^D\times\mathbb R^{\partial D}\times\mathbb R_+^E \to \mathbb R^D$ by
\[
G(\overline{u},\varphi,\gamma)_p := \sum_{q\in N(p)} \gamma_{pq} (u_q-u_p) - f_p(u_p), \quad p\in D.
\]
Then the equation $G(\overline{u},\varphi,\gamma)=0$ is exactly the nonlinear model \eqref{eqn:Model_Nonlinear} (with the boundary value \(\varphi\)). By Theorem \ref{thm:well-posed}, there exists a unique solution $u_0\equiv u_{\gamma_0,\varphi_0}\in\mathbb{R}^{\overline{D}}$ satisfying
$G(\overline{u_0},\varphi_0,\gamma_0)=0$. We claim that \(D_{\overline{u}}G(\overline{u_0},\varphi_0,\gamma_0)\) is invertible. The derivative $D_{\overline{u}}G(\overline{u_0},\varphi_0,\gamma_0) $ in \(\overline{u}\) is given by
\[
D_{\overline{u}}G(\overline{u_0},\varphi_0,\gamma_0) = - M_D  - \mathrm{diag}(f_p'((u_0)_p)),
\]
where $M_D$ is the matrix representation of the graph Laplacian given in \eqref{eqn:graph-Lap}.
By the argument in Lemma \ref{lem:convexity-coercivity}, since $M_D$ is positive definite it follows that $-M_D$ is negative definite. Since $\operatorname{diag}(f_p'((u_0)_p))$ is positive semidefinite by Assumption \ref{Assumption: fp in C1}, the derivative $D_{\overline{u}}G(\overline{u_0},\varphi_0,\gamma_0)$ is negative definite and invertible. Since each \(f_p\) is \(C^r\), the map \(G\) is jointly \(C^r\) in $(\overline{u},\varphi,\gamma)$
near \((\overline{u_0},\varphi_0,\gamma_0)\). The implicit function theorem implies that there exist
neighborhoods $W\ni \overline{u_0}$, $U_\varphi\ni\varphi_0$ and $U_\gamma\ni\gamma_0$, and a unique \(C^r\) map $S:U_\gamma\times U_\varphi\to W$ such that
$S(\gamma_0,\varphi_0)=\overline{u_0}$ and
\[
G(S(\gamma,\varphi),\varphi,\gamma)=0,\quad \forall (\gamma,\varphi)\in U_\gamma\times U_\varphi.\]
By the uniqueness in Theorem \ref{thm:well-posed}, $S(\gamma,\varphi)=\overline{u_{\gamma,\varphi}}$. Thus the map
$(\gamma,\varphi)\mapsto \overline{u_{\gamma,\varphi}}$ is $C^r$, and the map $(\gamma,\varphi)\mapsto u_{\gamma,\varphi}$ is also \(C^r\). Meanwhile, with \(u=u_{\gamma,\varphi}\), the DtN map $F_\gamma(\varphi)$ is given by
\[
(F_\gamma(\varphi))_r= \sum_{q\in N(r)} \gamma_{rq} \bigl(u_r-u_q\bigr), \quad r\in\partial D.
\]
Since $u_r=\varphi_r$ for $r\in\partial D,$ the formula shows that $F_\gamma(\varphi)$ depends on \(\gamma\), \(\varphi\), and
\(u_{\gamma,\varphi}\) through \(C^r\) operations. Therefore $(\gamma,\varphi)\mapsto F_\gamma(\varphi)$
is \(C^r\) on \(U_\gamma\times U_\varphi\).

\medskip

\noindent\textbf{(ii)}
Let $(\gamma,\varphi)\in U_\gamma\times U_\varphi$ and set $u:=u_{\gamma,\varphi}$.
For any $h\in\mathbb R^{\partial D}$, the \(C^r\) regularity in part \({\rm (i)}\) implies that
$w = \left. \frac{\mathrm d}{\mathrm dt} u_{\gamma,\varphi+t h} \right|_{t=0}$ exists. Differentiating
$G(\overline{u_{\gamma,\varphi+t h}},\varphi+t h,\gamma)=0$ with respect to \(t\) at \(t=0\)  gives
\[
D_{\overline{u}}G(\overline{u},\varphi,\gamma)w + D_\varphi G(\overline{u},\varphi,\gamma)h =0.
\]
Upon rearrangement, we obtain
\[
\sum_{q\in N(p)} \gamma_{pq} \bigl(w_q-w_p\bigr) = f_p'(u_p)w_p, \quad \forall p\in D,
\]
with the boundary condition $w|_{\partial D}=h$. This is precisely problem \eqref{eq:linearized}, proving the assertion in part (ii).

\medskip
\noindent\textbf{(iii)}
By part \({\rm (i)}\), the map $(\gamma,\varphi)\mapsto F_\gamma(\varphi)$
is \(C^r\). Hence for each fixed \(\gamma\), the map $\varphi\mapsto F_\gamma(\varphi)$ is Fr\'{e}chet differentiable on \(U_\varphi\).
Let \(h\in\mathbb R^{\partial D}\), and let \(w\) be the solution of the linearized problem \eqref{eq:linearized}. Differentiating the DtN
formula gives, for each \(r\in\partial D\),
\begin{align*}
\bigl(D_\varphi F_\gamma(\varphi)h\bigr)_r &= \left. \frac{\mathrm d}{\mathrm dt}
(F_\gamma(\varphi+t h))_r \right|_{t=0} = \sum_{q\in N(r)}\gamma_{rq} \bigl(w_r - w_q\bigr).
\end{align*}
Since $w_r=h_r$  for $r\in\partial D$, the right-hand side is exactly the Neumann data of the
solution \(w\) of problem \eqref{eq:linearized}. Hence the identity $D_\varphi F_\gamma(\varphi)h = \Lambda_\gamma(\varphi)h$ holds, i.e., 
$D_\varphi F_\gamma(\varphi)=\Lambda_\gamma(\varphi)$.
Finally, because $(\gamma,\varphi)\mapsto F_\gamma(\varphi)$ is \(C^r\), its derivative with respect to
\(\varphi\) is \(C^{r-1}\) in $(\gamma,\varphi)$. Thus the map $(\gamma,\varphi)\mapsto \Lambda_\gamma(\varphi)$
is of class \(C^{r-1}\) on \(U_\gamma\times U_\varphi\).
\end{proof}

Suppose that \(f\) satisfies Assumption~\ref{Assumption: fp in C1}. If, in addition, $f_p(0)=0$ and $f_p'(0)=0$ for every $p\in D$, then for \(\varphi\equiv0\), the unique solution $u$ of \eqref{eqn:Model_Nonlinear} vanishes identically. Problem \eqref{eq:linearized} has a zero reaction term: $\zeta_p=f_p'(u_p)=f_p'(0)=0$, and the linearized DtN map \(\Lambda_\gamma(0)=D_\varphi F_\gamma(0)\) is the DtN map of the
standard graph Laplacian. Hence, by the uniqueness results \cite{CurtisMorrow:1990,CurtisMorrow:1991} for the standard discrete Calder\'{o}n problem, the conductivity \(\gamma\) is uniquely determined by \(\Lambda_\gamma(0)\). Hence if the nonlinear map \(F_\gamma(\varphi)\) is known in a neighborhood of \(\varphi\equiv0\), then \(\Lambda_\gamma(0)\) can be determined from \(F_\gamma\), and then \(\gamma\) can be recovered.

For a general $\varphi \in \mathbb{R}^{\partial D}$, problem \eqref{eq:linearized} involves an unknown reaction term $\zeta_p=f_p'(u_p)$, since  \(u|_D\) is unknown, which greatly complicates the analysis. However, the pair \((\gamma,\zeta)\) is not arbitrary, and the two components are coupled via the relation $\zeta_p=f_p'(u_p)$ and the known nonlinear Cauchy data $\varphi^{(0)},\psi^{(0)}=F_\gamma(\varphi^{(0)})$.
The next theorem shows that the triple $\bigl(\Lambda_\gamma(\varphi^{(0)}),\psi^{(0)},\varphi^{(0)}\bigr)$ determines both \(u\) and \(\gamma\).

\begin{theorem}\label{Thm:LinearizeApproach}
Let Assumption~\ref{Assumption: fp in C1} hold. Let $ u=u_{\gamma,\varphi^{(0)}}$
be the solution of problem \eqref{eqn:Model_Nonlinear} with the Dirichlet boundary data $\varphi^{(0)}\in\mathbb R^{\partial D}$, and set $\psi^{(0)}:=F_\gamma(\varphi^{(0)})$. Let $\Lambda_\gamma(\varphi^{(0)})=D_\varphi F_\gamma(\varphi^{(0)})$ be the DtN map of problem
\eqref{eq:linearized}. Then the pair $(u,\gamma)$ is uniquely determined by the triple
 $\bigl(\Lambda_\gamma(\varphi^{(0)}),\psi^{(0)},\varphi^{(0)}\bigr)$. Moreover, the reconstruction is implemented locally on compatible data by
a \(C^{r-1}\) map $\mathcal R:\mathcal U\subset\mathbb R^{\partial D\times \partial D} \times
\mathbb R^{\partial D}\times \mathbb R^{\partial D}\to\mathbb R^{\overline D}\times \mathbb R_+^E$
such that
\[
\mathcal {R}\bigl( \Lambda_\gamma(\varphi^{(0)}),\psi^{(0)},\varphi^{(0)}\bigr)=(u,\gamma).
\]
\end{theorem}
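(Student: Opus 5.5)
Our plan is an interleaved layer-stripping argument that recovers, diagonal by diagonal, both $\gamma$ and the background potential $u=u_{\gamma,\varphi^{(0)}}$. Write $\Lambda:=\Lambda_\gamma(\varphi^{(0)})$. It is the DtN map of the linear problem \eqref{eq:linearized} with reaction coefficient $\zeta_p=f_p'(u_p)\ge 0$, so Lemma~\ref{lem:phi2-linear-smooth} applies with $c_p=\zeta_p$, even though $\zeta$ is unknown. For each $k=1,\ldots,n$ we set
\[
h^{(k)}:=\mathfrak E_{k,1}\mathbf e_k-\mathfrak E_{k,2}B^\dagger A\,\mathbf e_k,\qquad A=\mathfrak R_{>k}\Lambda\mathfrak E_{k,1},\ B=\mathfrak R_{>k}\Lambda\mathfrak E_{k,2},
\]
which is computed from $\Lambda$ alone. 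Its Neumann data $g^{(k)}:=\Lambda h^{(k)}$ are also known. By Lemma~\ref{lem:phi2-linear-smooth}, $h^{(k)}$ is the linear $k$-corner datum for \eqref{eq:linearized}. Lemmas~\ref{lem:corner-data-support} and \ref{Lemma:NoneZero} hold for the linear reaction $s\mapsto\zeta_p s$, which satisfies Assumption~\ref{assumption:fp-zero}. Hence the linearized solution $w^{(k)}$ vanishes on $\overline D_{>k}$ and is nonzero on $\overline D_k$.

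We then argue by induction on $\ell$, with the hypothesis that $\gamma$ is known on $E(\overline D_{\le\ell})$ and $u$ is known on $D_{\le\ell}$.

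\emph{Base step.} From $w^{(1)}$ we get $\gamma_{(1,0)(1,1)}=g^{(1)}_{(1,0)}/h^{(1)}_{(1,0)}$, and similarly on the left edge, exactly as in Theorem~\ref{thm:gamma-recovery}. Then $u_{(1,1)}$ follows from the nonlinear Cauchy pair $(\varphi^{(0)},\psi^{(0)})$ via the boundary flux formula, as in \eqref{eqn:pot-lower}.

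\emph{Induction step.} Since $\gamma$ and $u$ are known on $D_{\le\ell}$, the values $\zeta_p=f_p'(u_p)$ are known on $D_{\le\ell}$, and in particular on $D_{\le \ell-1}$. Lemma~\ref{lem:PotentialDetermination}, applied to the linear equation with these known reaction coefficients and the local Cauchy data $(h^{(\ell)},g^{(\ell)})$, gives $w^{(\ell)}$ on $D_{\le\ell}$. Steps~2--3 of Theorem~\ref{thm:gamma-recovery} then recover the currents across layer $\ell+1$ and hence $\gamma$ on $E(\overline D_{\le\ell+1})$. Equation \eqref{eqn:eq-xi} at $p\in D_\ell$ requires $\zeta_p w_p$, which is known because $u_p$ is known on $D_\ell$. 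With $\gamma$ now known on $E(\overline D_{\le \ell+1})$, Lemma~\ref{lem:PotentialDetermination} with index $\ell+1$, applied to the nonlinear data $(\varphi^{(0)},\psi^{(0)})$, gives $u$ on $D_{\le \ell+1}$. The $f_p$ it needs are those on $D_{\le\ell}$, which are known functions. This closes the induction up to $\ell=n+1$. The reflected construction from the upper-right corner recovers the remaining edges and potentials, as in the corollary. At that point $u$ is known on all of $D$, so $\zeta$ is known and the upper-right step runs in the same way. Alternatively, once $\gamma$ is known everywhere, $u$ is determined by the forward problem.

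The bookkeeping of the interleaving is the main obstacle. We must check that each step uses $\zeta$ only where $u$ has already been recovered, in particular at nodes of $D_\ell$ when solving \eqref{eqn:eq-xi}. The unknown $\zeta$ on $D_{>\ell}$ never enters because $w^{(\ell)}$ vanishes there. We also need to confirm that Lemma~\ref{lem:PotentialDetermination} is applicable for the linear problem with the ($\ell$-dependent) known coefficients.

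For the regularity claim, every operation is one of the following: the $C^\infty$ pseudo-inverse map on the open set $\mathcal U_k$ from Lemma~\ref{lem:phi2-linear-smooth}, arithmetic, division by quantities that are nonzero and stay nonzero nearby by Lemma~\ref{Lemma:NoneZero}, evaluations of $f_p\in C^r$, and evaluations of $f_p'\in C^{r-1}$. Hence $\mathcal R$ is $C^{r-1}$ on a neighborhood of the compatible data.
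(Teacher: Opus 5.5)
Your proposal is correct and follows the paper's proof essentially step for step. It builds the linearized corner data $h^{(k)}$ from $\Lambda$ via the pseudo-inverse formula of Lemma~\ref{lem:phi2-linear-smooth}, uses the same base step, and runs the same interleaved induction: recover $w^{(\ell)}$ on $D_{\le\ell}$ with $\zeta$ known on $D_{\le\ell-1}$, then $\gamma$ on the next layer using $\zeta$ on $D_\ell$, then $u$ on $D_{\le\ell+1}$ from $(\varphi^{(0)},\psi^{(0)})$; the reflected pass and the $C^{r-1}$ bookkeeping also match.

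The one thing to fix is the wording of the upper-right step. After the lower-left pass, $u$ is known only on $D_{\le n+1}$, so the reflected pass must carry out its own interleaved recovery of $u$ (and hence $\zeta$). Knowing $u$ on all of $D$ is therefore the outcome of that pass, not an input to it.
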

\begin{proof}
We give a constructive proof for the lower-left half of the lattice. The proof consists of two steps.

\smallskip
\noindent
\underline{Step 1: Construction of linearized corner data from
\(\Lambda_\gamma(\varphi^{(0)})\).}
Let
$\Lambda:=\Lambda_\gamma(\varphi^{(0)})$. For each \(\ell=1,\ldots,n\), set $h_{\ell,1}:=\mathbf e_\ell\in\mathbb R^{J_{\ell,1}},
$ where \(\mathbf e_\ell\) is the coordinate vector localized at the node \((\ell,0)\in \partial D\). By Lemma
\ref{lem:phi2-linear-smooth} (on problem \eqref{eq:linearized} with $\zeta_p=f'_p(u_p)\geq 0$), we can construct $h_{\ell,2}\in\mathbb R^{J_{\ell,2}}$ using the linearized DtN matrix \(\Lambda\) such that $h^{(\ell)}:=\mathfrak{E}_{\ell,1} h_{\ell,1} + \mathfrak{E}_{\ell,2}h_{\ell,2}$ is a linearized lower-left \(\ell\)-corner datum. Let $z^{(\ell)}:=\Lambda h^{(\ell)}$. The maps $\Lambda\mapsto h^{(\ell)}$, and $\Lambda\mapsto z^{(\ell)}$ are both \(C^\infty\). Therefore, the construction $\Lambda \mapsto \{(h^{(\ell)},z^{(\ell)})\}_{\ell=1}^n$ is \(C^\infty\) near the compatible matrix \(\Lambda_\gamma(\varphi^{(0)})\).

\smallskip
\noindent
\underline{Step 2: Layerwise reconstruction of \(\gamma\) and \(u\).} We reconstruct the pair $(u,\gamma)$ by induction along the layer $\ell$. Let $w^{(\ell)}$ denote the solution of problem \eqref{eq:linearized} with \(h = h^{(\ell)}\). Since \(h^{(\ell)}\) is a linearized
corner datum, an analogue of Lemma~\ref{lem:corner-data-support} (for problem \eqref{eq:linearized}) gives $w^{(\ell)}=0$ on $\overline D_{>\ell}$, and moreover, Lemma~\ref{Lemma:NoneZero} and the condition \(h^{(\ell)}_{(\ell,0)}=1\) ensure $w^{(\ell)}\neq0$ on the diagonal $\overline D_\ell$.

\noindent \emph{Base step.} Consider the first corner pair $(h^{(1)},z^{(1)})$. Since $w^{(1)}_{(1,1)}=0$, by definition, we have
\begin{align*}
z^{(1)}_{(1,0)}&= \gamma_{(1,0)(1,1)} \bigl( h^{(1)}_{(1,0)}-w^{(1)}_{(1,1)}\bigr) = \gamma_{(1,0)(1,1)}h^{(1)}_{(1,0)},\\
z^{(1)}_{(0,1)} &=\gamma_{(0,1)(1,1)}\bigl(h^{(1)}_{(0,1)}-w^{(1)}_{(1,1)}\bigr) = \gamma_{(0,1)(1,1)}h^{(1)}_{(0,1)}.
\end{align*}
Since $h^{(1)}_{(1,0)}$ and $h^{(1)}_{(0,1)}$ are nonzero by the nonvanishing property of the linearized corner datum, we get
\[
\gamma_{(1,0)(1,1)} = \frac{z^{(1)}_{(1,0)}}{h^{(1)}_{(1,0)}}
\quad\mbox{and}\quad
\gamma_{(0,1)(1,1)} = \frac{z^{(1)}_{(0,1)}}{h^{(1)}_{(0,1)}}.
\]
Then the background nonlinear Cauchy data determine \(u_{(1,1)}\):
\[
\psi^{(0)}_{(0,1)} = \gamma_{(0,1)(1,1)} \bigl(\varphi^{(0)}_{(0,1)}-u_{(1,1)} \bigr)\quad \mbox{i.e.,}\quad u_{(1,1)} = \varphi^{(0)}_{(0,1)} - \frac{\psi^{(0)}_{(0,1)}}{\gamma_{(0,1)(1,1)}}.
\]
Thus we have recovered $\gamma|_{E(\overline D_{\le2})}$ and $u|_{D_{\le2}}$.

\medskip
\noindent
\emph{Induction step.} Suppose that, for some \(2\leq\ell\leq n\), we have already recovered $\gamma|_{E(\overline D_{\le\ell})}$ and $u|_{D_{\le\ell}}$. Now we recover $\gamma|_{E(\overline D_{\le\ell+1})}$ and $u|_{D_{\le\ell+1}}$. This is further broken down into three substeps (i)--(iii) below.

\medskip
\noindent \underline{(i) Recovery of \(w^{(\ell)}|_{D_{\le\ell}}\).} Note that the reaction coefficient $\zeta_p:=f_p'(u_p)$ is known for \(p\in D_{\le\ell}\) (and thus also for $p\in D_{\le\ell-1}$), since \(u\) is known over $D_{\le \ell}$ by the induction hypothesis. Lemma~\ref{lem:PotentialDetermination} (on problem \eqref{eq:linearized}), with known $\gamma$ on \(E(\overline D_{\le\ell})\) and \(\zeta_p=f_p'(u_p)\) on \(D_{\le\ell-1}\), and the local linearized Cauchy data $h^{(\ell)}|_{\partial D_{\le\ell-1}}$ and $z^{(\ell)}|_{\partial D_{\le\ell-1}}$ uniquely determines $w^{(\ell)}|_{\overline{D}_{\le\ell}}$. The dependence of $w^{(\ell)}|_{\overline{D}_{\leq \ell}}$ on the data is \(C^{r-1}\), since \(\zeta_p=f_p'(u_p)\) depends \(C^{r-1}\)-smoothly on \(u_p\).

\medskip
\noindent\underline{(ii) Recovery of \(\gamma|_{E(\overline D_{\le\ell+1})}\).}
For an oriented edge \(pq\), we define the current $\eta_{pq}$ by Ohm's law: $\eta_{pq}:=\gamma_{pq}\bigl(w^{(\ell)}_p-w^{(\ell)}_q\bigr)$. Then the governing equation for $w^{(\ell)}$ at the node \(p\) is equivalent to
\begin{align}\label{eqn:govern-eta}
\sum_{q\in N(p)}\eta_{pq}=-\zeta_p w^{(\ell)}_p.
\end{align}
Take any node $q_0=(i,j)\in D_{\ell+1}$. Since $w^{(\ell)}_{q_0}=0$, we have \(\zeta_{q_0} w^{(\ell)}_{q_0}=0\), and equation \eqref{eqn:govern-eta} at \(q_0\) gives the cancellation relation between the two currents entering \(q_0\) from $D_{\ell}$. Indeed, upon letting $\xi_{(i,j)}:=\eta_{(i-1,j),(i,j)}$, we obtain the sequence
\[
\eta_{(i,j-1),(i,j)} = -\xi_{(i,j)},\quad (i,j)\in D_{\ell+1}.
\]
Note that $\xi_{(1,\ell)}=z^{(\ell)}_{(0,\ell)}$ and $\xi_{(\ell,1)}=-z^{(\ell)}_{(\ell,0)}$.
Now let $p=(i,j)\in D_\ell$. The currents from \(p\) to \(\overline D_{\ell-1}\) are already known, since both \(\gamma\) and \(w^{(\ell)}\) are known on \(\overline D_{\le\ell}\). The remaining two currents are those from \(p\) to \((i+1,j)\) and \((i,j+1)\in D_{\ell+1}\). Thus equation \eqref{eqn:govern-eta} at the node \(p\) gives a triangular recursion for all $\xi_{(i,j)}$ with $i+j=\ell+1$:
\[
\xi_{(i+1,j)} - \xi_{(i,j+1)}  = -\zeta_p w^{(\ell)}_p - (\eta_{p,(i-1,j)} + \eta_{p,(i,j-1)}).
\]
Now we can recover the conductivity $\gamma$ on $E(\overline D_{\le\ell+1})\setminus E(\overline D_{\le\ell})$. Since $ w^{(\ell)}=0$  on $\overline D_{>\ell}$, for \(q_0=(i,j)\in D_{\ell+1}\) we have $w^{(\ell)}_{q_0}=0$. Since $w^{(\ell)}_{(i-1,j)}$ and $w^{(\ell)}_{(i,j-1)}$ are nonzero by the nonvanishing property of the linearized corner data, cf. Lemma \ref{Lemma:NoneZero}, we derive
\[
\gamma_{(i-1,j),(i,j)}= \frac{\xi_{(i,j)}}{w^{(\ell)}_{(i-1,j)}} \quad \mbox{and}\quad \gamma_{(i,j-1),(i,j)} = -\frac{\xi_{(i,j)}}{w^{(\ell)}_{(i,j-1)}}.
\]
Hence \(\gamma\) is recovered on $E(\overline D_{\le\ell+1})$. The dependence is \(C^{r-1}\), because it involves \(\zeta_p=f_p'(u_p)\) and the previously reconstructed \(u\).

\medskip
\noindent\underline{(iii) Recovery of \(u|_{D_{\le\ell+1}}\).} Now that $\gamma|_{E(\overline D_{\le\ell+1})}$ is known (from step (ii)), Lemma~\ref{lem:PotentialDetermination} (on problem \eqref{eqn:Model_Nonlinear}), the Cauchy data $(\varphi^{(0)}|_{\partial D_{\le\ell}}, \psi^{(0)}|_{\partial D_{\le\ell}})$  determine $u|_{D_{\le\ell+1}}$. This completes the induction step. Therefore the lower-left part of \((u,\gamma)\) is uniquely recovered.

\noindent Repeating the construction from the upper-right corner gives the remaining conductivities and potentials. Hence the full pair $(u,\gamma)$ is uniquely determined by $\bigl(\Lambda_\gamma(\varphi^{(0)}),\psi^{(0)},\varphi^{(0)}\bigr).$
Finally, all steps involve finitely many arithmetic operations, divisions by nonzero quantities, the Moore--Penrose pseudoinverse formula (on a fixed full-rank set),
and evaluations of \(f_p'\). Thus the reconstruction map $\mathcal{R}$ depends \(C^{r-1}\)-smoothly on the compatible data. This gives the asserted local \(C^{r-1}\) reconstruction map. 
\end{proof}

Theorem~\ref{Thm:LinearizeApproach} gives a constructive proof of the uniqueness for compatible data. For the passage from linearized measurements to finitely many nonlinear measurements in Section \ref{sec:finite-data}, we also need a nondegeneracy property of the differential of the ideal measurement map
\begin{equation}\label{eqn:ideal-data}
H_0(\gamma,\varphi) := \bigl(\Lambda_\gamma(\varphi),F_\gamma(\varphi),\varphi\bigr), \quad \mbox{with }\Lambda_\gamma(\varphi):=D_\varphi F_\gamma(\varphi).
\end{equation}
Indeed, the constructive formulas in the proof of Theorem~\ref{Thm:LinearizeApproach} remain well defined in a
neighborhood of every compatible datum, since the rank and nonvanishing conditions used in the recovery are open. The next result extracts from the proof a local smooth left inverse. Differentiating the left-inverse identity implies that the map $H_0$ has full-rank differential.
\begin{proposition}[Local reconstruction near compatible data]
\label{prop:local-reconstruction}
Suppose that Assumption~\ref{Assumption: fp in C1} holds with \(r\geq 2\).  Then, for every \((\gamma_0,\varphi_0)\in
\mathbb R_+^E\times\mathbb R^{\partial D}\), there exist open
neighborhoods
$U_0\subset \mathbb R_+^E\times\mathbb R^{\partial D}$ and $V_0\subset
\mathbb R^{\partial D\times\partial D}
\times\mathbb R^{\partial D}
\times\mathbb R^{\partial D}$,
with \((\gamma_0,\varphi_0)\in U_0\),
\(H_0(\gamma_0,\varphi_0)\in V_0\), and \(H_0(U_0)\subset V_0\), and a
map \(\mathcal R_0\in C^{r-1}(V_0;\mathbb R_+^E)\) such that
\[
\mathcal R_0\bigl(H_0(\gamma,\varphi)\bigr)=\gamma,
\quad
(\gamma,\varphi)\in U_0.
\]
Consequently, the map $\mathcal L_0:V_0\to\mathbb R_+^E\times\mathbb R^{\partial D}$, $\mathcal L_0(X,\psi,\varphi):=\bigl(\mathcal R_0(X,\psi,\varphi),\varphi\bigr)$, is a local \(C^{r-1}\) left inverse of \(H_0\), that is, $\mathcal L_0\bigl(H_0(\gamma,\varphi)\bigr) = (\gamma,\varphi)$ for $(\gamma,\varphi)\in U_0$, and moreover,
the differential $DH_0$ of $H_0$ is locally injective and has full column rank.
\end{proposition}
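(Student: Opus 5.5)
The plan is to extend the constructive formulas in the proof of Theorem~\ref{Thm:LinearizeApproach} from the image of $H_0$ to an open set of arbitrary triples $(X,\psi,\varphi)$, where $X$ plays the role of $\Lambda_\gamma(\varphi)$. Fix $(\gamma_0,\varphi_0)$ and set $X_0=\Lambda_{\gamma_0}(\varphi_0)$. The first step is to write the reconstruction as an explicit finite composition $\widetilde{\mathcal R}(X,\psi,\varphi)$ of elementary operations: arithmetic operations, divisions, evaluations of $f_p'$, and the pseudoinverse maps $X\mapsto -B(X)^\dagger A(X)\mathbf e_\ell$ of Lemma~\ref{lem:phi2-linear-smooth}. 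This is done for both the lower-left and the upper-right corners, and $\widetilde{\mathcal R}$ outputs the conductance of every edge. I will perform the construction at the level of formulas only: each linearized corner datum $h^{(\ell)}(X)$ and current vector $z^{(\ell)}(X)=Xh^{(\ell)}(X)$ is defined for all $X\in\mathcal U_\ell$. The recursions of Lemma~\ref{lem:PotentialDetermination} (for both the linearized and the nonlinear equation) are then applied verbatim, and the quotients $\xi/w$ define the new conductances.

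The second step identifies the open set on which $\widetilde{\mathcal R}$ is defined and smooth. The conditions needed are finitely many. First, $X\in\bigcap_\ell\mathcal U_\ell$ (and the analogous sets for the reflected corner), which is open by Lemma~\ref{lem:phi2-linear-smooth}. Second, each denominator must be nonzero: the boundary values $h^{(1)}_{(0,1)}$, the reconstructed $w^{(\ell)}$ on the diagonal $\overline D_\ell$, and the intermediate reconstructed conductances used as divisors in Lemma~\ref{lem:PotentialDetermination}. At the compatible point $H_0(\gamma_0,\varphi_0)$, all these quantities are nonzero by Lemma~\ref{Lemma:NoneZero} and by positivity of $\gamma_0$. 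The linearized equation has coefficient $\zeta_p=f_p'(u_p)\ge 0$, so the linear version of Lemma~\ref{lem:phi2-linear-smooth} applies to give $X_0\in\mathcal U_\ell$. Because every quantity is a continuous function of $(X,\psi,\varphi)$, strict inequalities persist on a neighborhood $V_0$ of $H_0(\gamma_0,\varphi_0)$. I will shrink $V_0$ so that the output conductances stay positive, i.e.\ lie in $\mathbb R_+^E$. On $V_0$, the map $\widetilde{\mathcal R}$ is $C^{r-1}$, since the only non-$C^\infty$ ingredient is $f_p'\in C^{r-1}$. Define $\mathcal R_0=\widetilde{\mathcal R}|_{V_0}$. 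Continuity of $H_0$ (Proposition~\ref{prop:linearization}, where $r\ge2$ makes $\Lambda_\gamma(\varphi)$ at least $C^1$) then gives a neighborhood $U_0$ with $H_0(U_0)\subset V_0$.

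The third step proves $\mathcal R_0(H_0(\gamma,\varphi))=\gamma$ for $(\gamma,\varphi)\in U_0$. For compatible data, the formulas of $\widetilde{\mathcal R}$ coincide step by step with the proof of Theorem~\ref{Thm:LinearizeApproach}, which returns the true $\gamma$. The identity $\mathcal L_0\circ H_0=\mathrm{id}$ on $U_0$ follows immediately, because the $\varphi$-component is just a projection. Differentiating by the chain rule, which is valid since $\mathcal L_0\in C^{r-1}$ with $r-1\ge1$ and $H_0\in C^{r-1}$, gives $D\mathcal L_0(H_0(\gamma,\varphi))\,DH_0(\gamma,\varphi)=I$. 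Hence $DH_0$ is injective, i.e.\ has full column rank at every point of $U_0$.

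The main obstacle is the bookkeeping in the second step. The proof of Theorem~\ref{Thm:LinearizeApproach} uses compatibility implicitly, and the extension must avoid this. For instance, in the $\xi$-recursion the terminal value is overdetermined, since it must agree with both $z^{(\ell)}_{(0,\ell)}$ and the recursion. My choice is to use only the recursion started from $\xi_{(\ell,1)}=-z^{(\ell)}_{(\ell,0)}$ and ignore the redundant endpoint. Similarly, the first layer uses a specific single Neumann entry. With these choices fixed, every step is a genuine function on an open set rather than a consistency statement. The other delicate point is that the upper-right sweep must not rely on the lower-left output in a way that breaks openness. I will simply define the two sweeps independently and assign each edge to one of them.
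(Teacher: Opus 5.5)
Your proposal is correct and follows essentially the same route as the paper. Both arguments extend the explicit reconstruction formulas of Theorem~\ref{Thm:LinearizeApproach} to an open neighborhood $V_0$ of $H_0(\gamma_0,\varphi_0)$, using that the full-rank and nonvanishing conditions are open, then shrink for positivity, obtain $U_0$ from continuity of $H_0$, and differentiate $\mathcal L_0\circ H_0=\mathrm{Id}$. Your explicit handling of the overdetermined quantities (the redundant endpoint of the $\xi$-recursion and the independent assignment of edges to the two sweeps) is more careful than the paper, which only mentions the diagonal overlap in a remark.
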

\begin{proof}
Fix \((\gamma_0,\varphi_0)\), and set \(X_0:=\Lambda_{\gamma_0}(\varphi_0)\) and \(\psi_0:=F_{\gamma_0}(\varphi_0)\). Thus \(H_0(\gamma_0,\varphi_0) =(X_0,\psi_0,\varphi_0)\). By the proof of Theorem~\ref{Thm:LinearizeApproach}, $\gamma$ is recovered from a triple \((X,\psi,\varphi)\) by finitely many explicit operations: additions, multiplications, evaluations of \(f_p\) and \(f_p'\), the Moore--Penrose
formula on matrices of fixed full column rank, and divisions by nonzero quantities (at the compatible datum \((X_0,\psi_0,\varphi_0)\)).
The full-rank condition and nonvanishing of the denominators are open conditions. Hence there exists an open neighborhood \(V_0\ni (X_0,\psi_0,\varphi_0)\) on which the reconstruction formulas are well-defined. Now we define \(\mathcal R_0(X,\psi,\varphi)\) on \(V_0\) to be the conductivity
$\gamma$ obtained from the process. Since the Moore--Penrose pseudoinverse is smooth
on the set of matrices with fixed full column rank (cf. Lemma \ref{lem:phi2-linear-smooth}), and since all other operations are \(C^{r-1}\), the map \(\mathcal R_0:V_0\to\mathbb R^E\) is \(C^{r-1}\). By shrinking \(V_0\), if necessary, we may also assume \(\mathcal R_0(V_0)\subset\mathbb R_+^E\), since \(\mathcal R_0(X_0,\psi_0,\varphi_0)=\gamma_0\in\mathbb R_+^E\). By Proposition~\ref{prop:linearization}, the map \(H_0(\gamma,\varphi)\)
is \(C^{r-1}\). Therefore, after possibly shrinking to an open neighborhood \(U_0\ni(\gamma_0,\varphi_0)\), we have
\(H_0(U_0)\subset V_0\). For every \((\gamma,\varphi)\in U_0\), the triple \(H_0(\gamma,\varphi)\) is compatible, and the reconstruction formula in
Theorem~\ref{Thm:LinearizeApproach} recovers $\gamma$. Thus
\[
\mathcal R_0\bigl(H_0(\gamma,\varphi)\bigr)=\gamma, \quad\forall (\gamma,\varphi)\in U_0.
\]
By definition,
\[
\mathcal L_0\bigl(H_0(\gamma,\varphi)\bigr) = \bigl(\mathcal R_0(H_0(\gamma,\varphi)),\varphi\bigr) = (\gamma,\varphi), \quad
\forall (\gamma,\varphi)\in U_0,
\]
i.e., \(\mathcal L_0\) is a local \(C^{r-1}\) left inverse of \(H_0\). 
Finally
\[
D\mathcal{L}_0\bigl(H_0(\gamma,\varphi)\bigr)\,
D H_0(\gamma,\varphi)
=
\operatorname{Id}_{\mathbb{R}^{E}\times\mathbb{R}^{\partial D}},
\]
from which we also have that $DH_0$ is injective and full-rank.
\end{proof}

\begin{remark}
The values on the diagonal can be obtained from the lower-left reconstruction and the upper-right reconstruction. Nonetheless, these values will coincide when the triple $(X,\psi,\varphi)$ is compatible.
\end{remark}

Proposition~\ref{prop:local-reconstruction} gives a local smooth left inverse $\mathcal{L}_0$ of the ideal measurement map $H_0$.   The perturbative argument in Section \ref{sec:finite-data} also requires injectivity on the entire admissible parameter set. The following corollary records the global injectivity, which follows directly from Theorem~\ref{Thm:LinearizeApproach}.

\begin{corollary}\label{Cor:Diff to gamma}
Under Assumption \ref{Assumption: fp in C1}, the map $H_0({\gamma},\varphi ) := \bigl(D_\varphi  {F}_{\gamma}(\varphi),  {F}_{\gamma}(\varphi), \varphi  \bigr)$ is $C^{r-1}$ and injective.
\end{corollary}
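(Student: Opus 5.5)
The corollary makes two claims, and I will prove them separately: that $H_0$ is $C^{r-1}$, and that it is injective on all of $\mathbb R_+^E\times\mathbb R^{\partial D}$.

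\emph{Regularity.} Proposition~\ref{prop:linearization} applies at an arbitrary base point $(\gamma_0,\varphi_0)$. On the corresponding neighborhood $U_\gamma\times U_\varphi$ it shows that $(\gamma,\varphi)\mapsto F_\gamma(\varphi)$ is $C^r$ and that $(\gamma,\varphi)\mapsto \Lambda_\gamma(\varphi)=D_\varphi F_\gamma(\varphi)$ is $C^{r-1}$. The third component, $(\gamma,\varphi)\mapsto\varphi$, is linear. Regularity is a local property, and these neighborhoods cover $\mathbb R_+^E\times\mathbb R^{\partial D}$, so $H_0$ is $C^{r-1}$ everywhere. Uniqueness of the solution (Theorem~\ref{thm:well-posed}) guarantees that the locally defined solution maps agree on overlaps, so they patch together consistently.

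\emph{Injectivity.} Suppose $H_0(\gamma,\varphi)=H_0(\tilde\gamma,\tilde\varphi)$. Comparing third components gives $\varphi=\tilde\varphi=:\varphi^{(0)}$. The remaining components then give
\[
\Lambda_\gamma(\varphi^{(0)})=\Lambda_{\tilde\gamma}(\varphi^{(0)})
\quad\text{and}\quad
F_\gamma(\varphi^{(0)})=F_{\tilde\gamma}(\varphi^{(0)}).
\]
So both $\gamma$ and $\tilde\gamma$ are conductivities compatible with the same triple $\bigl(\Lambda,\psi^{(0)},\varphi^{(0)}\bigr)$. Theorem~\ref{Thm:LinearizeApproach} states that this triple uniquely determines the pair $(u,\gamma)$. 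Hence $\gamma=\tilde\gamma$, and in addition $u_{\gamma,\varphi^{(0)}}=u_{\tilde\gamma,\varphi^{(0)}}$.

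\emph{Main point to check.} The proof of Theorem~\ref{Thm:LinearizeApproach} is constructive: every quantity it produces is an explicit function of the data alone. This holds at each stage:
\begin{itemize}
\item the linearized corner data $h^{(\ell)}$ come from $\Lambda$ via $-B^\dagger A$, with $B$ injective by Lemma~\ref{lem:phi2-linear-smooth} applied to either conductivity;
\item the layerwise recursions produce $w^{(\ell)}$, $\gamma$ and $u$ layer by layer;
\item the divisions are legitimate, since Lemma~\ref{Lemma:NoneZero} gives nonvanishing denominators for each candidate;
\item the reaction coefficients $\zeta_p=f_p'(u_p)$ are computed from the already reconstructed $u$.
\end{itemize}
An induction over the layers, run once from the lower-left corner and once from the upper-right corner, therefore shows that the two candidates produce identical intermediate quantities and hence identical $\gamma$. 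This closes the argument.
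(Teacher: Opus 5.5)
Your proposal is correct and follows the paper's proof: $C^{r-1}$ regularity comes from Proposition~\ref{prop:linearization}, and injectivity comes from matching components and invoking the uniqueness in Theorem~\ref{Thm:LinearizeApproach}. Your extra paragraph unpacking the constructive recursion is not needed, but it correctly explains why that theorem applies to two candidate conductivities sharing the same $\Lambda$.
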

\begin{proof}
The \(C^{r-1}\)-regularity of the map \(H_0\) follows from Proposition~\ref{prop:linearization}. To prove injectivity, suppose that
$H_0(\gamma_1,\varphi_1) =H_0(\gamma_2,\varphi_2)$.
Component-wise comparison gives
\begin{align*}
\varphi_1=\varphi_2=:\varphi,\quad F_{\gamma_1}(\varphi) = F_{\gamma_2}(\varphi)=:\psi, \quad D_\varphi F_{\gamma_1}(\varphi) = D_\varphi F_{\gamma_2}(\varphi)=:\Lambda.
\end{align*}
Thus $\gamma_1$ and $\gamma_2$ generate the identical data triple $(\Lambda,\psi,\varphi)$. By Theorem~\ref{Thm:LinearizeApproach}, this triple uniquely determines the conductivity, and hence $\gamma_1=\gamma_2$, i.e., \(H_0\) is injective.
\end{proof}

\section{Recovery from finitely many nonlinear measurements}
\label{sec:finite-data}

In this section, we prove a uniqueness result for finitely many nonlinear measurements. The key idea is to replace the linearized DtN map $\Lambda_\gamma(\varphi)$ by finitely many evaluations of the nonlinear map $F_\gamma(\varphi)$. Specifically, let \(\{v_i\}_{i=1}^m\), \(m=|\partial D|\), be a set of boundary perturbation
directions that form an invertible matrix. Then intuitively, for sufficiently small \(t>0\), the difference quotients $\frac{F_\gamma(\varphi+t v_i)-F_\gamma(\varphi)}{t}$ approximate the vectors $D_\varphi F_\gamma(\varphi)v_i$, which form the linearized boundary data. The reference measurement map is $ H_0(\gamma,\varphi)=\bigl(D_\varphi F_\gamma(\varphi),F_\gamma(\varphi),\varphi\bigr)$. Corollary~\ref{Cor:Diff to gamma} shows that the map $H_0$ is injective, and Proposition~\ref{prop:local-reconstruction} gives a locally smooth left inverse of \(H_0\) and full rank of \(DH_0\). Thus the map \(H_0\) has the two properties needed for a perturbation argument: global injectivity and the nondegeneracy of the local differential.

It remains to show that these properties carry over when \(D_\varphi F_\gamma(\varphi)\) is replaced by finite differences. The next lemma gives a finite-dimensional quantitative stability result: an injective \(C^1\) map with full-rank differential on a compact set remains injective under sufficiently small \(C^1\) perturbations.

\begin{lemma}[Quantitative stability of injectivity]
\label{lem:stable-injectivity}
Let \(U\subset\mathbb R^d\) be open and convex, let \(K\Subset U\) be compact, and let \(G\in C^1(U;\mathbb R^N)\). Suppose that \(G|_K\) is injective and that \(DG(x)\) has rank \(d\) for every \(x\in K\). Then there exists an open convex neighborhood \(U_K\) of \(K\), with \(\overline U_K\subset U\), and a constant \(\delta>0\) such that every \(\Phi\in C^1(U_K;\mathbb R^N)\) satisfying $\|\Phi-G\|_{C^1(U_K)}<\delta$ is injective on \(K\).
\end{lemma}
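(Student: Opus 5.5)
The plan is to split pairs of points of \(K\) into near pairs and far pairs. Near pairs will be handled by a uniform lower bound on the differential, which survives small \(C^1\) perturbations. Far pairs will be handled by a uniform lower bound on \(|G(x)-G(y)|\), which survives small \(C^0\) perturbations.

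\textbf{Step 1 (a neighborhood with a uniform lower bound).} The function \(x\mapsto \min_{|v|=1}|DG(x)v|\) is continuous and positive on \(K\), because \(DG(x)\) has rank \(d\) there. So there is \(c>0\) with \(|DG(x)v|\ge 2c|v|\) for all \(x\in K\). By uniform continuity of \(DG\) on a compact neighborhood of \(K\) inside \(U\), there is \(r>0\) such that \(|DG(z)v|\ge \tfrac32 c|v|\) for \(z\) within distance \(2r\) of \(K\), and \(\|DG(z)-DG(z')\|\le c/2\) whenever \(|z-z'|\le 2r\) and both points lie near \(K\).

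I take \(U_K\) to be the open \(r\)-neighborhood of the convex hull of \(K\), shrinking \(r\) if needed. The convex hull of a compact set is compact, and since \(U\) is convex it lies in \(U\). Hence \(U_K\) is open and convex, and \(\overline U_K\subset U\) once \(r\) is small.

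\textbf{Step 2 (near pairs).} Let \(x,y\in K\) with \(|x-y|<r\). The segment \([x,y]\) lies in \(U_K\), within distance \(r\) of \(x\). By the mean value theorem in integral form,
\[
G(x)-G(y)=\int_0^1 DG(y+s(x-y))(x-y)\,\mathrm ds = DG(x)(x-y)+R,\qquad |R|\le \tfrac c2|x-y|.
\]
This gives \(|G(x)-G(y)|\ge \tfrac32 c|x-y|\). If moreover \(\|\Phi-G\|_{C^1(U_K)}<\delta\le c/2\), then the same integral identity applied to \(\Phi-G\) on the convex set \(U_K\) gives \(|(\Phi-G)(x)-(\Phi-G)(y)|\le \delta|x-y|\). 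Therefore
\[
|\Phi(x)-\Phi(y)|\ge c|x-y|,
\]
which is nonzero whenever \(x\neq y\).

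\textbf{Step 3 (far pairs).} The set \(\{(x,y)\in K\times K: |x-y|\ge r\}\) is compact, and \(|G(x)-G(y)|\) is continuous and positive on it because \(G|_K\) is injective. So it has a minimum \(\mu>0\). Choosing \(\delta<\mu/2\) gives \(|\Phi(x)-\Phi(y)|\ge \mu-2\delta>0\) on these pairs.

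Taking \(\delta=\min(c/2,\mu/3)\) makes \(\Phi\) injective on \(K\).

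The main obstacle is Step 1: producing a single convex open \(U_K\) on which both the uniform rank bound and the segment argument are valid, while \(\overline U_K\subset U\). I expect the convex-hull construction to settle this, with convexity of \(U\) essential. One subtlety remains: segments between near points of \(K\) must stay in the region where \(DG\) is close to \(DG(x)\). This holds because those segments have length less than \(r\) and start at a point of \(K\).
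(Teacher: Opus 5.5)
Your proof is correct and follows essentially the same route as the paper. Near pairs get a uniform lower bound on the differential over $K$, which together with uniform continuity and the integral mean value formula along segments in the convex set $U_K$ gives a bound that survives $C^1$ perturbations; far pairs use the positive minimum of $|G(x)-G(y)|$ on the compact set where $|x-y|\ge r$, with $\delta$ a fixed fraction of the two constants. Your explicit choice of $U_K$ as a small open neighborhood of the convex hull of $K$ (compact, and contained in $U$ by convexity) also justifies a step the paper only asserts; the one omission is the trivial case in which no pair satisfies $|x-y|\ge r$, where any $\mu>0$ will do.
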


\begin{proof}
Since \(DG(x)\) has rank \(d\) on \(K\), the smallest singular value \(\sigma_{\min}(DG(x))\) of $DG(x)$ is positive on \(K\). By the compactness of the set $K$, we have
\[
\mu:=\min_{x\in K}\sigma_{\min}(DG(x))>0.
\]
Since \(U\) is open and convex and \(K\Subset U\), we may choose an open convex neighborhood \(U_K\) of \(K\) such that \(\overline U_K\subset U\).
By the uniform continuity of \(DG\) on \(\overline U_K\), there exists
\(\rho>0\) such that
\[
x\in K,\ z\in U_K,\ |z-x|<\rho \quad\Longrightarrow\quad \|DG(z)-DG(x)\|<\frac{\mu}{4}.
\]
Take \(x,y\in K\) with \(|x-y|<\rho\). Since \(U_K\) is convex, the segment
\([x,y]\) lies in \(U_K\), and hence
\[
G(y)-G(x)=\int_0^1 DG(x+s(y-x))(y-x)\,{\rm d}s.
\]
Thus we obtain a local Lipschitz bound for $G$:
\[
\begin{aligned}
|G(y)-G(x)| &\ge |DG(x)(y-x)| -\int_0^1\|DG(x+s(y-x))-DG(x)\| |y-x|\,{\rm d}s  \\
&\ge \mu|y-x| - \frac{\mu}{4}|y-x| = \frac{3\mu}{4}|y-x|.
\end{aligned}
\]
Next set $A_\rho:=\{(x,y)\in K\times K:\ |x-y|\ge \rho\}$. If \(A_\rho\neq\emptyset\), then the injectivity of the map \(G|_K\) and the compactness of $A_\rho$ imply $\eta:=\min_{(x,y)\in A_\rho}|G(x)-G(y)|>0$.
If \(A_\rho=\emptyset\), then choose any \(\eta>0\). Set $\delta:=\min\left\{\frac{\mu}{4},\frac{\eta}{4}\right\}.$ Suppose that \(\Phi\in C^1(U_K;\mathbb R^N)\) satisfies \(\|\Phi-G\|_{C^1(U_K)}<\delta\). We show that \(\Phi|_K\) is injective. Take \(x,y\in K\), \(x\neq y\). If \(|x-y|<\rho\), then
\begin{align*}
|\Phi(y)-\Phi(x)| &\ge |DG(x)(y-x)| -\int_0^1\|D\Phi(x+s(y-x))-DG(x)\|\,|y-x|\,{\rm d}s \\
&\ge \mu|y-x|-\left(\|D\Phi-DG\|_{C^0(U_K)} +\frac{\mu}{4}\right)|y-x| \ge \frac{\mu}{2}|y-x|>0.
\end{align*}
Otherwise, if \(|x-y|\ge\rho\), then
\[
|\Phi(y)-\Phi(x)|\ge |G(y)-G(x)|-2\|\Phi-G\|_{C^0(U_K)}\ge \frac{\eta}{2}>0.
\]
Thus \(\Phi(x)\neq\Phi(y)\) whenever \(x\neq y\), and \(\Phi\) is injective on \(K\).
\end{proof}

The uniform version below will be applied to the compact family $\mathcal{P}_\varepsilon$ of normalized, uniformly nondegenerate direction matrices.
\begin{corollary}[Uniform stability for a compact family]
\label{cor:uniform-stable-injectivity} Let \(U\subset\mathbb R^d\) be open and convex, let \(K\Subset U\) be
compact, and let \(A\) be a compact metric space. Let \(G:U\times A\to\mathbb R^N\) be continuous, and assume that \(G(\cdot,\alpha)\in C^1(U;\mathbb R^N)\) for each \(\alpha\in A\), with \((x,\alpha)\mapsto D_xG(x,\alpha)\) continuous on \(U\times A\).
Suppose that \(G(\cdot,\alpha)|_K\) is injective for every \(\alpha\in A\), and that \(D_xG(x,\alpha)\) has rank \(d\) for every
\((x,\alpha)\in K\times A\). Then there exist an open convex neighborhood \(U_K\) of \(K\), with \(\overline U_K\subset U\), and a constant \(\delta>0\) such that, for every \(\alpha\in A\), every
\(\Phi\in C^1(U_K;\mathbb R^N)\) satisfying $\|\Phi-G(\cdot,\alpha)\|_{C^1(U_K)}<\delta$ is injective on \(K\).
\end{corollary}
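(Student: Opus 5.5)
We will follow the proof of Lemma~\ref{lem:stable-injectivity} line by line and make every constant uniform in \(\alpha\in A\) using compactness of \(K\times A\). First, since \((x,\alpha)\mapsto D_xG(x,\alpha)\) is continuous and has rank \(d\) on \(K\times A\), and the smallest singular value is a continuous function of the matrix, we set
\[
\mu:=\min_{(x,\alpha)\in K\times A}\sigma_{\min}\bigl(D_xG(x,\alpha)\bigr)>0 .
\]
Exactly as before, we choose an open convex neighborhood \(U_K\) of \(K\) with \(\overline U_K\subset U\) compact. The set \(\overline U_K\times A\) is compact, so \(D_xG\) is uniformly continuous there. This gives \(\rho>0\), independent of \(\alpha\), such that \(\|D_xG(z,\alpha)-D_xG(x,\alpha)\|<\mu/4\) whenever \(x\in K\), \(z\in U_K\), \(|z-x|<\rho\) and \(\alpha\in A\).

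The main obstacle is making the separation constant \(\eta\) uniform in \(\alpha\). We define
\[
A_\rho:=\{(x,y)\in K\times K:|x-y|\ge\rho\},\qquad
\eta:=\min_{(x,y,\alpha)\in A_\rho\times A}|G(x,\alpha)-G(y,\alpha)|
\]
whenever \(A_\rho\neq\emptyset\), and take any \(\eta>0\) otherwise. The function \((x,y,\alpha)\mapsto|G(x,\alpha)-G(y,\alpha)|\) is continuous on the compact set \(A_\rho\times A\), because \(G\) is jointly continuous. It is strictly positive there, since each \(G(\cdot,\alpha)|_K\) is injective and \(x\neq y\) on \(A_\rho\). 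Hence the minimum is attained and \(\eta>0\). This step is the only place where joint continuity of \(G\) itself, and not only of \(D_xG\), is needed.

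Finally, we set \(\delta:=\min\{\mu/4,\eta/4\}\) and fix \(\alpha\in A\) and \(\Phi\) with \(\|\Phi-G(\cdot,\alpha)\|_{C^1(U_K)}<\delta\). The two-case argument of Lemma~\ref{lem:stable-injectivity} then goes through verbatim with \(G\) replaced by \(G(\cdot,\alpha)\).

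If \(|x-y|<\rho\), the segment \([x,y]\) lies in the convex set \(U_K\). The integral representation of \(\Phi(y)-\Phi(x)\) gives
\[
|\Phi(y)-\Phi(x)|\ge\Bigl(\mu-\delta-\tfrac{\mu}{4}\Bigr)|y-x|\ge\tfrac{\mu}{2}|y-x|>0 .
\]

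If \(|x-y|\ge\rho\), then
\[
|\Phi(y)-\Phi(x)|\ge|G(y,\alpha)-G(x,\alpha)|-2\delta\ge\eta/2>0 .
\]

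Since \(U_K\) and \(\delta\) do not depend on \(\alpha\), every such \(\Phi\) is injective on \(K\), for every \(\alpha\in A\).
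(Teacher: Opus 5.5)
Your proposal is correct and follows the paper's proof essentially step for step. You take $\mu$ uniform over the compact set $K\times A$, get $\rho$ from uniform continuity of $D_xG$ on $\overline U_K\times A$, obtain $\eta$ by minimizing $|G(x,\alpha)-G(y,\alpha)|$ over the compact set $A_\rho\times A$, set $\delta=\min\{\mu/4,\eta/4\}$, and then run the same two-case argument; your explicit requirement that $\overline U_K$ be compact (i.e.\ $U_K$ bounded) makes precise what the paper leaves implicit when it invokes uniform continuity.
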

\begin{proof}
The proof is identical to that of Lemma~\ref{lem:stable-injectivity}, with all constants chosen uniformly in \(\alpha\). Namely, set $\mu:=\min_{(x,\alpha)\in K\times A}\sigma_{\min}(D_xG(x,\alpha))>0$. Choose an open convex \(U_K\) with \(\overline U_K\subset U\). By the uniform continuity of \(D_xG\) on \(\overline U_K\times A\), choose \(\rho>0\) such that the estimate \(\|D_xG(z,\alpha)-D_xG(x,\alpha)\|<\frac{\mu}{4}\) holds whenever \(x\in K\), \(z\in U_K\), and \(|z-x|<\rho\). Define $A_\rho:=\{(x,y,\alpha)\in K\times K\times A:\ |x-y|\ge\rho\}$.
If \(A_\rho\neq\emptyset\), then the continuity, compactness, and injectivity of each \(G(\cdot,\alpha)|_K\) imply
\[
\eta:=\min_{(x,y,\alpha)\in A_\rho}|G(x,\alpha)-G(y,\alpha)|>0.
\]
If \(A_\rho=\emptyset\), choose any \(\eta>0\). Let $\delta=\min\left\{\frac{\mu}{4},\frac{\eta}{4}\right\}$. Then the argument in Lemma~\ref{lem:stable-injectivity} applies uniformly in \(\alpha\).
\end{proof}

We now apply the stability results in Lemma \ref{lem:stable-injectivity} and Corollary \ref{cor:uniform-stable-injectivity} to the nonlinear
DtN map $F_\gamma(\varphi)$.
The injectivity of the reference map $H_0$ follows from Theorem \ref{Thm:LinearizeApproach}, and its full-rank differential follows from
Proposition~\ref{prop:local-reconstruction}. Taylor expansion shows that the finite-difference map converges to $D_\varphi F_\gamma(\varphi)$ in \(C^1\), uniformly over compact parameter sets and uniformly over
all normalized direction matrices satisfying the prescribed uniform nondegeneracy condition. Finally, the stability lemma gives the following finite-data uniqueness result.

\begin{theorem}\label{thm:finite-nonlinear-data}
Suppose that Assumption~\ref{Assumption: fp in C1} holds with \(r\ge3\). Let \(K_\gamma\Subset\mathbb R_+^E\) and \(K_\varphi\Subset\mathbb R^{\partial D}\) be compact, and set \(K:=K_\gamma\times K_\varphi\). Let \(m:=|\partial D|\). For \(\varepsilon>0\), define
\[
\mathcal P_\varepsilon:=\left\{P=[v_1,\ldots,v_m]\in\mathbb R^{m\times m}:\|v_i\|=1,\ i=1,\ldots,m,\ |\det P|\ge\varepsilon\right\}.
\]
Then there exists \(t_0>0\), depending only on \(K_\gamma\), \(K_\varphi\) and \(\varepsilon\), such that for every \(P=[v_1,\ldots,v_m]\in\mathcal P_\varepsilon\) and every \(t\in(0,t_0]\), the following implication holds. If two pairs \((\gamma,\varphi),(\gamma',\varphi')\in K\) satisfy
\begin{align}\label{eqn:finite-data}
\varphi=\varphi',\quad F_\gamma(\varphi)=F_{\gamma'}(\varphi'),\quad F_\gamma(\varphi+t v_i)= F_{\gamma'}(\varphi'+t v_i), \quad i=1,\ldots,m,
\end{align}
then \(\gamma=\gamma'\).
\end{theorem}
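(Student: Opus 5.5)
The plan is to apply Corollary~\ref{cor:uniform-stable-injectivity} with the parameter space $A=\mathcal P_\varepsilon$. This set is compact: it is closed and bounded, and $|\det P|\ge\varepsilon$ is a closed condition. The unknowns are $x=(\gamma,\varphi)\in\mathbb R^E\times\mathbb R^{\partial D}$, so $d=|E|+m$. Choose an open convex set $U$ with $K\Subset U$ and $\overline U\subset\mathbb R_+^E\times\mathbb R^{\partial D}$; a slightly enlarged product of convex hulls works, since positivity of $\gamma$ is preserved under convex combinations. One technical point: Proposition~\ref{prop:linearization} is only local, but it holds at every point, so $(\gamma,\varphi)\mapsto F_\gamma(\varphi)$ is $C^r$ on all of $\mathbb R_+^E\times\mathbb R^{\partial D}$.

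\textbf{Reference and perturbed maps.} For $P\in\mathcal P_\varepsilon$, the reference map is
\[
G(x,P):=\bigl(D_\varphi F_\gamma(\varphi)P,\ F_\gamma(\varphi),\ \varphi\bigr),
\]
and the perturbed map is
\[
\Phi_t(x,P):=\Bigl(\bigl[t^{-1}(F_\gamma(\varphi+tv_i)-F_\gamma(\varphi))\bigr]_{i=1}^m,\ F_\gamma(\varphi),\ \varphi\Bigr).
\]
The data \eqref{eqn:finite-data} give exactly $\Phi_t((\gamma,\varphi),P)=\Phi_t((\gamma',\varphi'),P)$. Hence injectivity of $\Phi_t(\cdot,P)$ on $K$ yields $\gamma=\gamma'$.

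\textbf{Hypotheses on $G$.} Since $G(x,P)=\mathcal M_P(H_0(x))$, where $\mathcal M_P(X,\psi,\varphi)=(XP,\psi,\varphi)$ is a linear isomorphism ($P$ is invertible), injectivity of $G(\cdot,P)$ on $K$ follows from Corollary~\ref{Cor:Diff to gamma}. The rank-$d$ property of $D_xG=\mathcal M_P\circ DH_0$ follows from Proposition~\ref{prop:local-reconstruction}, which needs $r\ge2$. Joint continuity of $G$ and $D_xG$ in $(x,P)$ holds because $H_0\in C^{r-1}\subset C^1$ (here $r\ge 2$ suffices) and $P$ enters bilinearly.

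\textbf{The main step: the $C^1$ estimate.} We must show
\[
\sup_{P\in\mathcal P_\varepsilon}\|\Phi_t(\cdot,P)-G(\cdot,P)\|_{C^1(U_K)}\to0\quad\text{as }t\to0,
\]
where $U_K$ is the neighborhood produced by Corollary~\ref{cor:uniform-stable-injectivity}. This is where $r\ge3$ is used. By Taylor's formula with integral remainder,
\[
t^{-1}\bigl(F_\gamma(\varphi+tv)-F_\gamma(\varphi)\bigr)-D_\varphi F_\gamma(\varphi)v=t\int_0^1(1-s)\,D^2_\varphi F_\gamma(\varphi+stv)[v,v]\,\mathrm ds .
\]
Differentiating this identity in $(\gamma,\varphi)$ brings in third derivatives of $F$, which are continuous since $F\in C^3$. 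Because $\|v\|=1$ and $t\le1$, all evaluation points lie in the compact set $\overline U_K+\overline{B_1}$ in the $\varphi$-variable. I will choose $U$ small enough that this set stays inside the domain of $F$, which is automatic because $\varphi$ ranges over all of $\mathbb R^{\partial D}$. On that compact set the second and third derivatives are bounded, so the error is $O(t)$ in $C^1$, uniformly in $P$. The second and third components of $\Phi_t$ and $G$ coincide, so they contribute no error.

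\textbf{Conclusion.} Choose $t_0$ with $Ct_0<\delta$, where $\delta$ is the constant from Corollary~\ref{cor:uniform-stable-injectivity}. Then for all $t\in(0,t_0]$ and all $P\in\mathcal P_\varepsilon$, the map $\Phi_t(\cdot,P)$ is injective on $K$. This gives $\gamma=\gamma'$, and $t_0$ depends only on $K_\gamma$, $K_\varphi$ and $\varepsilon$.
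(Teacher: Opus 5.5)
Your proposal is correct and follows essentially the same route as the paper. You apply Corollary~\ref{cor:uniform-stable-injectivity} with the compact parameter set $\mathcal P_\varepsilon$ to the reference maps $G_P=J_P\circ H_0$, which are injective by Corollary~\ref{Cor:Diff to gamma} and have full-rank differential by Proposition~\ref{prop:local-reconstruction}. You then bound the finite-difference error by $O(t)$ in $C^1$, uniformly in $P$, using the integral Taylor remainder; this is the only place where $r\ge3$ is needed, while your remark that $r\ge2$ already suffices for the rank and continuity hypotheses is slightly sharper than the paper's accounting.
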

\begin{proof}
Choose open bounded convex sets \(U_\gamma\Subset\mathbb R_+^E\) and \(U_\varphi\Subset\mathbb R^{\partial D}\) such that \(K_\gamma\subset U_\gamma\) and \(K_\varphi\subset U_\varphi\). Set \(U:=U_\gamma\times U_\varphi\). For \(P=[v_1,\ldots,v_m]\in\mathcal P_\varepsilon\) and \(t\ge0\), define
\[
\Theta_t^P(\gamma,\varphi):=\bigl(Q_t^P(\gamma,\varphi),F_\gamma(\varphi),\varphi\bigr),\quad (\gamma,\varphi)\in U,
\]
where, for \(t>0\), the \(i\)-th column of \(Q_t^P(\gamma,\varphi)\) is \(\frac{F_\gamma(\varphi+t v_i)-F_\gamma(\varphi)}{t},\) while \(Q_0^P(\gamma, \varphi):=D_\varphi F_\gamma(\varphi)P.\) Below we denote \(G_P:=\Theta_0^P\). The proof is divided into four steps.

\medskip
\noindent
\textbf{Step 1: uniform \(C^1\)-convergence of \(\Theta_t^P\) to \(G_P\).} By Proposition~\ref{prop:linearization}, the map \((\gamma,\varphi)\mapsto F_\gamma(\varphi)\) is \(C^r\) ($r\geq 3$) on compact subsets of \(\mathbb R_+^E\times\mathbb R^{\partial D}\). For each \(i\in\{1,\ldots,m\}\), Taylor's formula gives
\begin{align}\label{eqn:Taylor-Qt}
F_\gamma(\varphi+t v_i)=F_\gamma(\varphi)+tD_\varphi F_\gamma(\varphi)v_i +t^2\int_0^1(1-s) D_\varphi^2F_\gamma(\varphi+s t v_i)[v_i,v_i]\,{\rm d}s.
\end{align}
Consequently, $Q_t^P(\gamma,\varphi)$ can be represented by
\[
Q_t^P(\gamma,\varphi)=D_\varphi F_\gamma(\varphi)P+tR_t^P(\gamma,\varphi),
\]
where the \(i\)-th column of \(R_t^P\) is the integral remainder in \eqref{eqn:Taylor-Qt}. Fix \(\tau>0\). The set
\[
\widetilde U_\tau:=\{(\gamma,\varphi+s v): \gamma\in\overline U_\gamma, \varphi\in\overline U_\varphi,\  0\le s\le\tau,\  v\in \mathbb{S}^{m-1}\}
\]
is compact. Since the map \((\gamma,\varphi)\mapsto F_\gamma(\varphi)\) is \(C^3\), the map \(D_\varphi^2F\) is \(C^1\) on \(\widetilde U_\tau\). Therefore there exists \(C_\tau>0\), independent of \(P\in\mathcal P_\varepsilon\), such that
$\sup_{P\in\mathcal P_\varepsilon}\sup_{0<t\le\tau} \|R_t^P\|_{C^1(U)} \le C_\tau.$ Consequently,
\[
\sup_{P\in\mathcal P_\varepsilon} \|\Theta_t^P-G_P\|_{C^1(U)} \le C_\tau t, \quad 0<t\le\tau.
\]

\medskip
\noindent
\textbf{Step 2: the reference family \(\{G_P\}\) satisfies uniform stability.} Clearly, the set \(\mathcal P_\varepsilon\) is compact. Using the reference map $H_0(\gamma,\varphi) :=(\Lambda_\gamma(\varphi),F_\gamma(\varphi),\varphi)$, for each \(P\in\mathcal P_\varepsilon\),
\[
G_P(\gamma,\varphi) = (D_\varphi F_\gamma(\varphi)P,F_\gamma(\varphi),\varphi) = J_P(H_0(\gamma,\varphi)),
\]
with $J_P(X,\psi,\varphi):=(XP,\psi,\varphi)$. Since \(|\det P|\ge\varepsilon>0\), the map \(J_P\) is a linear isomorphism. By Corollary~\ref{Cor:Diff to gamma}, \(H_0\) is injective; hence \(G_P|_K\) is injective for every \(P\in\mathcal P_\varepsilon\). Moreover, Proposition~\ref{prop:local-reconstruction} gives a local \(C^{r-1}\) left inverse $\mathcal{L}_0$ of the map \(H_0\) near every point of \(K\). Since \(r\ge3\), the left inverse $\mathcal{L}_0$ is at least \(C^2\). Thus, differentiating the identity $\mathcal{L}_0\circ H_0=\operatorname{Id}$ shows that \(DH_0\) has full rank on \(K\). Since \(J_P\) is a linear isomorphism, \(DG_P\) also has full rank on \(K\) for every \(P\in\mathcal P_\varepsilon\).
Finally, the map \(((\gamma,\varphi),P)\mapsto DG_P(\gamma,\varphi)\) is continuous on \(U\times\mathcal P_\varepsilon\). Therefore the family \(\{G_P\}_{P\in\mathcal P_\varepsilon}\) satisfies the hypotheses of Corollary~\ref{cor:uniform-stable-injectivity}.

\medskip
\noindent
\textbf{Step 3: injectivity of \(\Theta_t^P\) for small \(t\).} By Corollary~\ref{cor:uniform-stable-injectivity}, there exist an open convex neighborhood \(U_K\) of \(K\), with \(\overline U_K\subset U\), and  a constant \(\delta_{\rm st}>0\) such that, for every \(P\in\mathcal P_\varepsilon\), any \(C^1\) map \(\Phi\) satisfying $\|\Phi-G_P\|_{C^1(U_K)}<\delta_{\rm st}$ is injective on \(K\). Choose \(t_0\in(0,\tau]\) so small that $C_\tau t_0<\delta_{\rm st}$.
Then, for every \(P\in\mathcal P_\varepsilon\) and every \(t\in(0,t_0]\), there holds
\[
\|\Theta_t^P-G_P\|_{C^1(U_K)}\le \|\Theta_t^P-G_P\|_{C^1(U)} \le C_\tau t<\delta_{\rm st}.
\]
Hence \(\Theta_t^P|_K\) is injective for every
\(P\in\mathcal P_\varepsilon\) and \(t\in(0,t_0]\).

\medskip
\noindent
\textbf{Step 4: conclusion from equality of finite data.} Suppose that \((\gamma,\varphi),(\gamma',\varphi')\in K\) satisfy \eqref{eqn:finite-data}. Then, for each \(i\),
\[
\frac{F_\gamma(\varphi+t v_i)-F_\gamma(\varphi)}{t} = \frac{F_{\gamma'}(\varphi'+t v_i)-F_{\gamma'}(\varphi')}{t}.
\]
Thus the finite-difference components of \(\Theta_t^P\) agree. This and the equalities of \(F_\gamma(\varphi)\) and \(\varphi\) give
$\Theta_t^P(\gamma,\varphi) = \Theta_t^P(\gamma',\varphi').$ Since \(\Theta_t^P\) is injective on \(K\) (from Step 3), we obtain \((\gamma,\varphi)=(\gamma',\varphi')\). In particular, \(\gamma=\gamma'\).
\end{proof}

\bibliographystyle{abbrv}
\bibliography{references}
\end{document}